\documentclass[a4paper,11pt]{article}
\usepackage{}
\usepackage{mathrsfs}
\usepackage{amsfonts}
\usepackage{amsfonts}
\usepackage{amssymb}
\usepackage{mathrsfs}
\usepackage{indentfirst,latexsym,bm,amsmath,amssymb,amsthm}
\usepackage{xcolor}
\usepackage[
                   bookmarksnumbered=true,
                   bookmarksopen=true, 
                   pdfauthor=kmc,
                   pdfcreator={LaTex with hyperref package + WinEdt},
                   pdftitle=trudge,
                   colorlinks,%
                   citecolor=blue,
                   linkcolor=blue,%
                   urlcolor=blue,
                   hyperindex,%
                   plainpages=false,%
                   pdfstartview=FitH,
                   linktocpage=true,
                   dvipdfm%
                   ]{hyperref}
\usepackage{indentfirst,latexsym,bm,amsmath,amssymb,amsthm}
\usepackage[dvips]{graphicx}

\textheight=10in
\topmargin=-0.5in
\textwidth=6.05in
\oddsidemargin=0.26in
\baselineskip=60pt
\setlength{\parskip}{0pt}

\makeatletter\@addtoreset{equation}{section} \makeatother
\setlength{\unitlength}{1cm}
\newtheorem{thm}{Theorem}[section]
\newtheorem{Lemma}{Lemma}[section]

\newtheorem{rem}{Remark}[section]

\makeatletter \setlength{\parindent}{2em}


\makeatother
\title{On one-dimension quasilinear wave equations with\\
null conditions}


\author{ Dongbing Zha\thanks{Department of Mathematics and Institute for Nonlinear Sciences, Donghua University, Shanghai 201620, PR China.{ E-mail address: ZhaDongbing@163.com}}}

\begin{document}

\maketitle
\begin{abstract}
In this paper, we show that one-dimension systems of quasilinear wave equations with null conditions admit global classical solutions for small initial data. This result extends Luli, Yang and Yu's seminal work [G.~K. Luli, S.~Yang, P.~Yu, \href{https://doi.org/10.1016/j.aim.2018.02.022}{On
  one-dimension semi-linear wave equations with null conditions}, Adv. Math.
  329 (2018) 174--188] from the semilinear case to the quasilinear case. Furthermore, we also prove that the global solution is asymptotically free in the energy sense. In order to achieve these goals, we will employ Luli, Yang and Yu's weighted energy estimates with positive weights, introduce some space-time weighted energy estimates and pay some special attentions to the highest order energies,
 then use some suitable bootstrap process to close the argument. \\
\emph{keywords}: One-dimension quasilinear wave equations; null conditions; global existence.\\
\emph{2010 MSC}: 35L05; 35L72.
\end{abstract}

\pagestyle{plain} \pagenumbering{arabic}

\section{Introduction and main result}
In \cite{MR3783412}, for Cauchy problems of semilinear wave equations with null conditions in one space dimension, Luli, Yang and Yu proved the global existence of classical solutions with small initial data (a former result can be found in \cite{MR3121700}). This result can be viewed as a one-dimension and semilinear analogue of the pioneering works Klainerman \cite{Klainerman86} and Christodoulou \cite{MR820070} for the global existence of classical solutions for nonlinear wave equations with null conditions in three space dimensions, and of Alinhac \cite{Alinhac01} for the case of two space dimensions.\par
Global existence of small solutions to nonlinear wave equations with null conditions
has been a subject under active investigation for the past four decades. The approach to understand the small data problem is based on the decay mechanism of linear waves.
It is well-known that the decay rate is $(1+t)^{-\frac{d-1}{2}}$, for $d$-dimension linear waves. Thus, when $d\geq 4$, small-data-global-existence type theorems hold for generic quadratic nonlinearities, since the decay rate is integrable in time. See Klainerman \cite{MR544044,Klainerman85}. However, in $\mathbb{R}^{1+3}$, the slower decay rate $(1+t)^{-1}$ just barely fails to be integrable in time. Hence, the solution may blowup at finite time. See John \cite{John1}. In order to avoid the formation of singularities,
Klainerman \cite{Klainerman82} introduced the celebrated null conditions, then for three dimensional quasilinear wave equations with quadratic nonlinearities satisfying null conditions, based on the decay rate for linear waves and some special cancelations provided by such structural conditions,
Klainerman \cite{Klainerman86} and Christodoulou \cite{MR820070} independently proved the global existence theorems. As for $\mathbb{R}^{1+2}$, linear waves only admit decay rate $(1+t)^{-1/2}$, which is far from integrable in time. Nevertheless, for a class of two dimensional quasilinear wave equations, based on such decay rate for linear waves and the null conditions imposed on quadratic and cubic nonlinearities, Alinhac \cite{Alinhac01} can show the global existence theorem. For the general class of two dimensional quasilinear wave equations satisfying null conditions,  we refer the reader to Katayama
\cite{MR3729247} and Zha \cite{MR3912654}.
For more detailed explanations on the concept of null conditions, we refer the reader to Luli, Yang and Yu \cite{MR3783412}.
\par
As mentioned before,
 the proofs in \cite{Alinhac01}, \cite{MR820070}, \cite{MR544044}, \cite{Klainerman85} and \cite{Klainerman86} in high space dimension case are all based on the decay mechanism of linear waves. However, in one space dimension case waves do not decay, and any nonlinear resonance (even arbitrarily
high order) can lead to finite time blowup. Nevertheless, for one-dimension semilinear wave equations,
Luli, Yang and Yu \cite{MR3783412} can prove that small data still lead to global solutions if the null condition is satisfied.
Different from the high space dimension case,
 the mechanism for the global existence in one-dimension case
is the interaction of waves with different speeds, which will lead to the decay of nonlinear terms. In order to display this mechanism,  Luli, Yang and Yu developed a kind of weighted energy estimate with positive weights.\par
In this paper, we will consider the case for quasilinear wave equations, which arise naturally in many physical fields.
For this purpose, instead of the standard Cartesian coordinates $(t,x)$, we will mainly use the null coordinates
\begin{align}
\xi=\frac{t+x}{2},~~\eta=\frac{t-x}{2}.
\end{align}
We have the null vector fields
\begin{align}
\partial_{\xi}=\partial_t+\partial_x,~~\partial_{\eta}=\partial_t-\partial_x,
\end{align}
and also denote briefly $u_{\xi}=\partial_{\xi}u$ and $u_{\eta}=\partial_{\eta}u$.\par
It is well known that the one-dimension linear wave equation in the null coordinates $(\xi,\eta)$ can be written as $u_{\xi\eta}=0$. We will treat some quasilinear perturbation of it.
Consider the following one-dimension system of quasilinear wave equations
\begin{align}\label{quasiwave}
u_{\xi\eta}&=A_1(u,u_{\xi},u_{\eta})u_{\xi\eta}
+A_2(u,u_{\xi},u_{\eta})u_{\xi\xi}+A_3(u,u_{\xi},u_{\eta})u_{\eta\eta}+F(u,u_{\xi},u_{\eta}),
\end{align}
where the unknown function $u=u(t,x): \mathbb{R}^{1+1}\longrightarrow \mathbb{R}^{n}$, for $i=1,2,3$, $A_i: \mathbb{R}^{n}\times \mathbb{R}^{n}\times \mathbb{R}^{n}\longrightarrow  \mathbb{R}^{n\times n}$ are given smooth and matrix valued functions, and $F: \mathbb{R}^{n}\times \mathbb{R}^{n}\times \mathbb{R}^{n}\longrightarrow  \mathbb{R}^{n}$ is a given smooth and vector valued function. Moreover, we will always assume that $A_i ~(i=1,2,3)$ are symmetric.
\par
We call that the system \eqref{quasiwave} satisfies the null condition, if near the origin in $\mathbb{R}^{n}\times \mathbb{R}^{n}\times \mathbb{R}^{n}$, it holds that
\begin{align}\label{order1}
A_1(u,u_{\xi},u_{\eta})&=\mathscr{O}(|u|+|u_{\xi}|+|u_{\eta}|),\\\label{order22}
A_2(u,u_{\xi},u_{\eta})&=\mathscr{O}(|u_{\eta}|),\\\label{order33}
A_3(u,u_{\xi},u_{\eta})&=\mathscr{O}(|u_{\xi}|),\\\label{order4}
F(u,u_{\xi},u_{\eta})&=\mathscr{O}(|u_{\xi}||u_{\eta}|).
\end{align}
Inspired by Luli, Yang and Yu's seminal result \cite{MR3783412} in the semilinear case, it is natural to conjecture that the Cauchy problem of one-dimension system of quasilinear wave equations \eqref{quasiwave} satisfying null conditions \eqref{order1}--\eqref{order4} admits a unique global classical solution for small initial data. The main aim of this paper is to verify this conjecture. Furthermore, we also want to clarify the asymptotic behavior of the global solution.
\par
Now we introduce some vector fields and energies used in the following part of this paper.
We will use
 \begin{align}
 Z=(\partial_{\xi},\partial_{\eta})
  \end{align}
 as the commuting vector fields. For a multi-index $a=(a_1,a_2)$, set
  \begin{align}
  Z^{a}=\partial_{\xi}^{a_1}\partial_{\eta}^{a_2}
  \end{align}
   and $|a|=a_1+a_2$.\par
  Following Luli, Yang and Yu \cite{MR3783412},
we will use the following weighted energy with positive weights
\begin{align}\label{based}
E_1(u(t))=\|\langle \xi\rangle^{1+\delta}u_{\xi}\|_{L_{x}^2(\mathbb{R})}^2+\|\langle \eta\rangle^{1+\delta}u_{\eta}\|_{L_{x}^2(\mathbb{R})}^2,
\end{align}
where $\langle \cdot\rangle=(1+|\cdot|^2)^{1/2}$. Then we use
\begin{align}
E_2(u(t))=\sum_{|a|= 1}E_1(Z^{a}u(t))
\end{align}
to denote the second order energies.
As for the third order (highest order) energies, we have to distinguish some \lq\lq mixed derivatives" from others. Specifically speaking, we will employ the following third order weighted energies
\begin{align}\label{Pe33333}
E_3(u(t))=\sum_{\substack{|a|=2\\a_1\neq 0}}\|\langle \xi\rangle^{1+\delta}Z^{a}u_{\xi}\|_{L_{x}^2(\mathbb{R})}^2+\sum_{\substack{|a|=2\\a_2\neq 0}}\|\langle \eta\rangle^{1+\delta}Z^{a}u_{\eta}\|_{L_{x}^2(\mathbb{R})}^2
\end{align}
and
\begin{align}\label{Pe3fff883333}
\widetilde{E_3}(u(t))&=\sum_{\substack{|a|=2\\a_1=0}}\|\langle \xi\rangle^{1+\delta}Z^{a}u_{\xi}\|_{L_{x}^2(\mathbb{R})}^2+\sum_{\substack{|a|=2\\a_2= 0}}\|\langle \eta\rangle^{1+\delta}Z^{a}u_{\eta}\|_{L_{x}^2(\mathbb{R})}^2\nonumber\\
&=\|\langle \xi\rangle^{1+\delta}u_{\eta\eta\xi}\|_{L_{x}^2(\mathbb{R})}^2+\|\langle \eta\rangle^{1+\delta}u_{\xi\xi\eta}\|_{L_{x}^2(\mathbb{R})}^2.
\end{align}
We also set
\begin{align}\label{ertyuhhhhhh}
E(u(t))=E_1(u(t))+E_2(u(t))+E_3(u(t)).
\end{align}
\par
Inspired by Alinhac \cite{Alinhac01} and Lindblad and Rodnianski \cite{MR2680391}, based on \eqref{based}, we further introduce the following space-time weighted energy
\begin{align}\label{soapj8jnjjk}
\mathcal{E}_1(u(t))=\int_0^{t}\|\langle \eta\rangle^{-\frac{1+\delta}{2}}\langle \xi\rangle^{1+\delta}u_{\xi}\|_{L_{x}^2(\mathbb{R})}^2ds+\int_0^{t}\|\langle \xi\rangle^{-\frac{1+\delta}{2}}\langle \eta\rangle^{1+\delta}u_{\eta}\|_{L_{x}^2(\mathbb{R})}^2ds.
\end{align}
Denote the second order space-time weighted energies by
\begin{align}
\mathcal{E}_2(u(t))=\sum_{|a|=1}\mathcal{E}_1(Z^{a}u(t)).
\end{align}
Corresponding to \eqref{Pe33333} and \eqref{Pe3fff883333}, we will use the following third order space-time weighted energies
\begin{align}
\mathcal{E}_3(u(t))&=\sum_{\substack{|a|=2\\a_1\neq 0}}\int_0^{t}\|\langle \eta\rangle^{-\frac{1+\delta}{2}}\langle \xi\rangle^{1+\delta}Z^au_{\xi}\|_{L_{x}^2(\mathbb{R})}^2ds\nonumber\\
&+\sum_{\substack{|a|=2\\a_2\neq 0}}\int_0^{t}\|\langle \xi\rangle^{-\frac{1+\delta}{2}}\langle \eta\rangle^{1+\delta}Z^au_{\eta}\|_{L_{x}^2(\mathbb{R})}^2ds
\end{align}
and
\begin{align}\label{xuyaoyu78tt8999}
\widetilde{{\mathcal{E}}_3}(u(t))&=\sum_{\substack{|a|=2\\a_1= 0}}\int_0^{t}\|\langle \eta\rangle^{-\frac{1+\delta}{2}}\langle \xi\rangle^{1+\delta}Z^au_{\xi}\|_{L_{x}^2(\mathbb{R})}^2ds+\sum_{\substack{|a|=2\\a_2= 0}}\int_0^{t}\|\langle \xi\rangle^{-\frac{1+\delta}{2}}\langle \eta\rangle^{1+\delta}Z^au_{\eta}\|_{L_{x}^2(\mathbb{R})}^2ds\nonumber\\
&=\int_0^{t}\|\langle \eta\rangle^{-\frac{1+\delta}{2}}\langle \xi\rangle^{1+\delta}u_{\eta\eta\xi}\|_{L_{x}^2(\mathbb{R})}^2ds+\int_0^{t}\|\langle \xi\rangle^{-\frac{1+\delta}{2}}\langle \eta\rangle^{1+\delta}u_{\xi\xi\eta}\|_{L_{x}^2(\mathbb{R})}^2ds.
\end{align}
We also use the notation
\begin{align}\label{dhk8df88976}
\mathcal{E}(u(t))=\mathcal{E}_1(u(t))+\mathcal{E}_2(u(t))+\mathcal{E}_3(u(t)).
\end{align}
\par
We point out that for the system \eqref{quasiwave}, in the special case of $A_2=A_3=0$, it becomes
\begin{align}\label{quasiwave22}
u_{\xi\eta}&=A_1(u,u_{\xi},u_{\eta})u_{\xi\eta}+F(u,u_{\xi},u_{\eta}),
\end{align}
then we can rewrite \eqref{quasiwave22} as the following semilinear system
\begin{align}\label{quasiwave22ff}
u_{\xi\eta}&=\widetilde{F}(u,u_{\xi},u_{\eta})
\end{align}
with $\widetilde{F}=(I-A_1)^{-1}F$ satisfying
\begin{align}\label{order444}
\widetilde{F}(u,u_{\xi},u_{\eta})&=\mathscr{O}(|u_{\xi}||u_{\eta}|).
\end{align}
Thus, noting the equivalent form \eqref{quasiwave22ff}, we can easily get the global existence for the Cauchy problem of \eqref{quasiwave22} by using the result in \cite{MR3783412} (or the proof can be carried out in line with \cite{MR3783412} using \eqref{quasiwave22} directly). In order to treat the general case of $A_2\neq 0$ or $A_3\neq 0$, we must introduce some new approaches. This is the reason why we use the space-time weighted energy \eqref{soapj8jnjjk} on the strip $[0,t]\times \mathbb{R}$, instead of the weighted energy on the characteristical lines in \cite{MR3783412}. This is the first essential difference between the semilinear case and the quasilinear case.
\par
The reason why the third order energies $\widetilde{E_3}(u(t))$ and $\widetilde{{\mathcal{E}}_3}(u(t))$ concerning some \lq\lq mixed derivatives" need to be considered separately is that in our weighted energy estimates, they are not compatible with the null structure of the quasilinear part. In other words, if we treat them by weighted energy estimates, after integrating by parts, some terms in the quasilinear part will be uncontrollable. Thus, they can not be estimated via weighted energy estimates. This fact is also the second essential difference between the semilinear case and the quasilinear case. Fortunately, by using the system \eqref{quasiwave} directly, we find that $\widetilde{E_3}(u(t))$ and $\widetilde{{\mathcal{E}}_3}(u(t))$ can be controlled by $E(u(t))$ and $\mathcal{E}(u(t))$, respectively (see Lemma \ref{keyn9078p}). This observation is a key point in our treatment for the quasilinear part of the system.\par
In order to describe the asymptotic behavior of the global solution, now we introduce some concepts on it.
We say that a function $u=u(t,x)\in C(\mathbb{R}^{+};\dot{H}^1(\mathbb{R}))\cap C^1(\mathbb{R}^{+};L^2(\mathbb{R}))$ is asymptotically free in the energy sense, if there is
$(v_0,v_1)\in \dot{H}^1(\mathbb{R})\times L^2(\mathbb{R})$ such that
\begin{align}\label{profile0}
\lim_{t\rightarrow +\infty}\big(\|u_{\xi}-v_{\xi}\|^2_{L^2_{x}(\mathbb{R})}+\|u_{\eta}-v_{\eta}\|^2_{L^2_{x}(\mathbb{R})}\big)=0,
\end{align}
where $v\in C(\mathbb{R}^{+};\dot{H}^1(\mathbb{R}))\cap C^1(\mathbb{R}^{+};L^2(\mathbb{R}))$ is the unique global solution to the Cauchy problem of homogeneous linear wave equations
\begin{align}\label{profile1}
v_{\xi\eta}=0
\end{align}
with initial data
\begin{align}\label{profile2}
t=0:~v=v_0,~~v_t=v_1.
\end{align}
\par
Consider the Cauchy problem for the system \eqref{quasiwave} with initial data
\begin{align}\label{hj89i}
t=0:~u=u_0,~~u_t=u_1.
\end{align}
Our main result is the following
\begin{thm}\label{mainthm}
Assume that the system \eqref{quasiwave} satisfies the null condition.
Then for all $0<\delta<1$, there exists a positive constant $\varepsilon_0$ such that for any $0<\varepsilon\leq \varepsilon_0$, if
\begin{align}\label{xxxjkdddd900}
\sum_{l=0}^{3}\|\langle x\rangle^{1+\delta}\partial_x^{l}u_0\|_{L_{x}^2(\mathbb{R})}+\sum_{l=0}^{2}\|\langle x\rangle^{1+\delta}\partial_x^{l}u_1\|_{L_{x}^2(\mathbb{R})}\leq \varepsilon,
\end{align}
then the Cauchy problem \eqref{quasiwave}--\eqref{hj89i} admits a unique global classical solution $u$. Moreover, the global solution $u$ is asymptotically free in the energy sense.
\end{thm}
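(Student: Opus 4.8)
\emph{Strategy.} The plan is to run a continuity/bootstrap argument on the total energy $E(u(t))+\mathcal E(u(t))$ of \eqref{ertyuhhhhhh} and \eqref{dhk8df88976}: the positive weights $\langle\xi\rangle^{1+\delta}$, $\langle\eta\rangle^{1+\delta}$ encode the interaction of the two null speeds in the Luli--Yang--Yu fashion, while an Alinhac-type ghost weight turns them into the space-time energy $\mathcal E$, which is precisely what is needed to absorb the new quasilinear contributions. As a first step I would put \eqref{quasiwave} into a form that removes $A_1$: since \eqref{order1} forces $A_1$ to be small whenever $u,u_\xi,u_\eta$ are, $I-A_1$ is invertible along the solution and \eqref{quasiwave} is equivalent to
\[
u_{\xi\eta}=\widetilde A_2\,u_{\xi\xi}+\widetilde A_3\,u_{\eta\eta}+\widetilde F,\qquad \widetilde A_2=(I-A_1)^{-1}A_2,\quad \widetilde A_3=(I-A_1)^{-1}A_3,\quad \widetilde F=(I-A_1)^{-1}F,
\]
with $\widetilde A_2=\mathscr O(|u_\eta|)$, $\widetilde A_3=\mathscr O(|u_\xi|)$, $\widetilde F=\mathscr O(|u_\xi||u_\eta|)$, and $\widetilde A_2,\widetilde A_3$ still symmetric after a harmless symmetrization (cf.\ \eqref{quasiwave22ff}--\eqref{order444}). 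Standard quasilinear local well-posedness in the weighted Sobolev space dictated by \eqref{xxxjkdddd900} then gives a unique classical solution on a maximal interval $[0,T^\ast)$ together with the continuation criterion that $T^\ast<\infty$ forces $\limsup_{t\to T^\ast}E(u(t))=\infty$; so it suffices to prove the a priori bound $E(u(t))\lesssim\varepsilon^2$.

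\emph{Bootstrap and pointwise bounds.} Fix a large $C_0$ and assume on $[0,T]\subset[0,T^\ast)$ that $E(u(t))+\mathcal E(u(t))\le(C_0\varepsilon)^2$; the aim is to improve this to $\tfrac12(C_0\varepsilon)^2$. By Lemma \ref{keyn9078p} this automatically controls the mixed third-order quantities, $\widetilde{E_3}(u(t))\lesssim(C_0\varepsilon)^2E(u(t))$ and $\widetilde{{\mathcal E}_3}(u(t))\lesssim(C_0\varepsilon)^2\mathcal E(u(t))$, so the full third-order jet of $u$ (each piece with the appropriate positive weight) is at our disposal. From the one-dimensional Gagliardo--Nirenberg inequality $\|f\|_{L^\infty_x}^2\lesssim\|f\|_{L^2_x}\|\partial_x f\|_{L^2_x}$ applied to $f=\langle\xi\rangle^{1+\delta}Z^au_\xi$ and $f=\langle\eta\rangle^{1+\delta}Z^au_\eta$ (using $\partial_x=\partial_\xi-\partial_\eta$), the bootstrap yields $|Z^au_\xi|\lesssim C_0\varepsilon\langle\xi\rangle^{-1-\delta}$ and $|Z^au_\eta|\lesssim C_0\varepsilon\langle\eta\rangle^{-1-\delta}$ for $|a|\le1$, and, integrating $u_x=u_\xi-u_\eta$ from spatial infinity against \eqref{xxxjkdddd900}, $|u|\lesssim C_0\varepsilon$. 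Hence the coefficients satisfy $|\widetilde A_2|+|\partial\widetilde A_2|\lesssim C_0\varepsilon\langle\eta\rangle^{-1-\delta}$, $|\widetilde A_3|+|\partial\widetilde A_3|\lesssim C_0\varepsilon\langle\xi\rangle^{-1-\delta}$, $|\widetilde F|+|\partial\widetilde F|\lesssim(C_0\varepsilon)^2\langle\xi\rangle^{-1-\delta}\langle\eta\rangle^{-1-\delta}$.

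\emph{Weighted energy estimates.} For each admissible $Z^a$ (i.e.\ $|a|\le2$, and among the third order only those retained in $E_3$, not $\widetilde{E_3}$), apply $Z^a$ to the reduced equation: $\partial_\eta(Z^au_\xi)=Z^a(\widetilde A_2u_{\xi\xi}+\widetilde A_3u_{\eta\eta}+\widetilde F)$. Multiply by $e^{q(\eta)}\langle\xi\rangle^{2+2\delta}Z^au_\xi$ with the ghost weight $q(\eta)=-\int_0^\eta\langle\sigma\rangle^{-1-\delta}\,d\sigma$, which is bounded (here $1+\delta>1$ is used), integrate over $[0,t]\times\mathbb R$ and integrate by parts via $\partial_\eta=\partial_t-\partial_x$; the ghost weight produces exactly the $\mathcal E$-term on the good side. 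Running the mirror computation for the $u_\eta$-components (weight $\langle\eta\rangle^{1+\delta}$, ghost $e^{q(\xi)}$) and summing over $a$ gives $E(u(t))+\mathcal E(u(t))\lesssim\varepsilon^2+|\text{nonlinear errors}|$. The semilinear-type errors from $Z^b\widetilde F$ always carry a factor with a transverse, decaying weight, so after Cauchy--Schwarz in the measure $\langle\eta\rangle^{-1-\delta}\langle\xi\rangle^{2+2\delta}dx\,ds$ (or its mirror) and the elementary convolution bound $\int_{\mathbb R}\langle\xi\rangle^{-2-2\delta}\langle\eta\rangle^{-1-\delta}\,dx\lesssim\langle t\rangle^{-1-\delta}$ they are $\lesssim C_0\varepsilon\big(E(u(t))+\mathcal E(u(t))\big)+(C_0\varepsilon)^3\big(E(u(t))+\mathcal E(u(t))\big)^{1/2}$. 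For the quasilinear errors from $Z^b(\widetilde A_2u_{\xi\xi})$ and $Z^b(\widetilde A_3u_{\eta\eta})$ the only delicate piece is the top-order one, where all derivatives fall on $u$: using the symmetry of $\widetilde A_i$ one rewrites the product as a total $\partial_\xi$- or $\partial_\eta$-derivative up to terms carrying $\partial\widetilde A_i$, integrates by parts once more, and finds that the bulk terms trade the extra derivative for a factor $\lesssim C_0\varepsilon\langle\cdot\rangle^{-1-\delta}$ while the boundary-in-time terms are $\lesssim C_0\varepsilon\,E(u(t))$ --- the crucial point being that the null conditions \eqref{order22}--\eqref{order33} make the decaying weight supplied by $\widetilde A_i$ coincide with the energy weight, so everything is absorbable. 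Whenever this scheme would leave an uncontrollable mixed third-order derivative $u_{\xi\xi\eta}$ or $u_{\eta\eta\xi}$, one does not re-differentiate it but replaces it through the equation (as in Lemma \ref{keyn9078p}), at the cost of an extra $C_0\varepsilon$. Collecting everything gives $E(u(t))+\mathcal E(u(t))\le C_1\varepsilon^2+C_2C_0\varepsilon\big(E(u(t))+\mathcal E(u(t))\big)$, and choosing $\varepsilon_0$ with $C_2C_0\varepsilon_0\le\tfrac12$ and $C_0^2\ge4C_1$ closes the bootstrap; by the continuation criterion $T^\ast=\infty$, so the solution is global. Note that boundedness of $E_1(u(t))$ gives $u\in C(\mathbb R^+;\dot H^1)\cap C^1(\mathbb R^+;L^2)$.

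\emph{Asymptotic freedom, and the main obstacle.} In null coordinates $\partial_\eta u_\xi=\widetilde A_2u_{\xi\xi}+\widetilde A_3u_{\eta\eta}+\widetilde F$, and the pointwise bounds above together with the now-global bound $\mathcal E(u(\infty))<\infty$ show that, for a.e.\ $\xi$, the function $h(\xi):=\int_{-\xi}^{\infty}\big|\widetilde A_2u_{\xi\xi}+\widetilde A_3u_{\eta\eta}+\widetilde F\big|(\xi,\eta')\,d\eta'$ is finite and lies in $L^2_\xi$; hence $w_+(\xi):=u_\xi(\xi,-\xi)+\int_{-\xi}^{\infty}(\,\cdots\,)(\xi,\eta')\,d\eta'$ exists and, by dominated convergence, $\|u_\xi(t,\cdot)-w_+\|_{L^2_x}\to0$ as $t\to+\infty$; symmetrically $\|u_\eta(t,\cdot)-w_-\|_{L^2_x}\to0$ for a function $w_-$ of $\eta$ alone. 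Taking $v$ to be the free solution \eqref{profile1}--\eqref{profile2} with data $v_1=\tfrac12\big(w_+(\cdot/2)+w_-(-\cdot/2)\big)\in L^2$ and $\partial_xv_0=\tfrac12\big(w_+(\cdot/2)-w_-(-\cdot/2)\big)\in L^2$, one has $v_\xi=w_+$, $v_\eta=w_-$, i.e.\ \eqref{profile0}. I expect the genuine difficulty to be concentrated in the weighted energy estimates of the third paragraph: closing the integration by parts in the top-order quasilinear terms needs both that the $L^\infty$ weight forced by the null condition on $A_2,A_3$ be exactly the positive weight of the corresponding energy, and careful bookkeeping that keeps the mixed derivatives $u_{\xi\xi\eta}$, $u_{\eta\eta\xi}$ out of the weighted estimates and inside the reach of Lemma \ref{keyn9078p} --- this is exactly where the quasilinear case departs from the semilinear one.
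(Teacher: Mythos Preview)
Your overall architecture --- bootstrap on $E+\mathcal E$, ghost weight producing the space--time energy, the $E_3/\widetilde{E_3}$ split with Lemma~\ref{keyn9078p} handling the mixed third-order pieces --- is exactly the paper's. There is, however, one genuine gap.

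\textbf{The symmetrization is not harmless.} You pass from \eqref{quasiwave} to the reduced equation with $\widetilde A_i=(I-A_1)^{-1}A_i$ and assert that $\widetilde A_2,\widetilde A_3$ are ``still symmetric after a harmless symmetrization''. For vector-valued $u$ ($n\ge2$) this is false: $(I-A_1)^{-1}A_2$ is in general not symmetric even when $A_1,A_2$ are, and no algebraic symmetrization preserves the equation. Your top-order integration by parts then fails: $2Z^au_\xi^{T}\widetilde A_2 Z^au_{\xi\xi}$ equals $\partial_\xi\big(Z^au_\xi^{T}\widetilde A_2 Z^au_\xi\big)-Z^au_\xi^{T}\partial_\xi\widetilde A_2\,Z^au_\xi$ \emph{only} when $\widetilde A_2=\widetilde A_2^{T}$; otherwise an uncontrollable antisymmetric top-order remainder $Z^au_\xi^{T}(\widetilde A_2-\widetilde A_2^{T})Z^au_{\xi\xi}$ survives. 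The reference you give, \eqref{quasiwave22ff}--\eqref{order444}, treats only the semilinear remainder $\widetilde F$, where no symmetry is at stake. The paper sidesteps the issue by \emph{not} inverting $I-A_1$: it multiplies the differentiated system \eqref{quasiwave8989}, \eqref{quasiwavegao7888} by the plain multiplier $Z^au_\xi$ (resp.\ $Z^au_\eta$) and uses the symmetry of $A_1,A_2,A_3$ separately, so that $A_1Z^au_{\xi\eta}$ contributes to a modified energy density $Z^au_\xi^{T}A_1Z^au_\xi$ (the $\widetilde e_2,\widetilde e_3$ in \eqref{siof00766hm}, \eqref{By3555}). Equivalently, you may keep your reduced form but use $(I-A_1)$ as a symmetrizer in the multiplier; either way the step you flagged as harmless is where the actual structure is used.

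\textbf{A minor difference.} Your scattering argument differs from the paper's: you integrate $\partial_\eta u_\xi$ along characteristics and appeal to dominated convergence, while the paper shows the right-hand side of \eqref{quasiwave} lies in $L^1_tL^2_x$ (via the pointwise bound $\lesssim\varepsilon^2\langle\xi\rangle^{-1-\delta}\langle\eta\rangle^{-1-\delta}$ and \eqref{yiweio09}) and quotes Lemma~\ref{xuyaoodddd897}. Both are correct and rest on the same pointwise input; yours is more explicit about the profile, the paper's is shorter.
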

\begin{rem}
For general one-dimension quasilinear wave equations with quadratic nonlinearity
\begin{align}
u_{\xi\eta}=Q(u,u_{\xi},u_{\eta},u_{\xi\xi},u_{\xi\eta},u_{\eta\eta}),
\end{align}
 Li, Yu and Zhou {\rm{\cite{MR1190814}}} showed the classical solution only admits some lifespan of order $\varepsilon^{-1/2}$. Note that the null condition can enhance the lifespan from such
 short time order to infinity.
\end{rem}
\begin{rem}
A small-data-global-existence type result for one-dimension quasilinear hyperbolic systems of diagonal form using the energy method is established in {\rm{\cite{MR4001049}}}.
Some small-data-global-existence type results using the characteristic approach can be found in {\rm{\cite{MR2309570}}}, {\rm{\cite{MR1284811}}}, {\rm{\cite{MR3730736}}}, and {\rm{\cite{MR2544109}}}, etc. See also  {\rm{\cite{MR2045426}}}. For some large-data-global-existence results, we refer the reader to {\rm{\cite{MR596432}}}.
\end{rem}

\begin{rem}
We note that for the system \eqref{quasiwave}, if the null condition is satisfied, any left traveling wave $f(\xi)$ and right traveling wave $g(\eta)$ are solutions to it. An interesting problem is the stability of these traveling wave solutions. Some related results can be found in {\rm{\cite{Wong22}}}, {\rm{\cite{MR3223827}}} and {\rm{\cite{MR3057301}}}.
\end{rem}

The outline of this paper is as follows. In Section \ref{hjsedddd}, some necessary tools used to prove Theorem \ref{mainthm} are introduced.
Section \ref{xhzuyo898} is devoted to the proof of Theorem \ref{mainthm}.
\section{Preliminaries}\label{hjsedddd}
First, by the fundamental theorem of calculus, chain rule and Leibniz's rule, it is easy to get the following two lemmas.
 \begin{Lemma}\label{xuyaouioo999}
 Assume that $A_1=A_1(u,u_{\xi},u_{\eta}), A_2=A_2(u,u_{\xi},u_{\eta})$, $A_3=A_3(u,u_{\xi},u_{\eta})$ and $F=F(u,u_{\xi},u_{\eta})$ satisfy \eqref{order1}, \eqref{order22},
\eqref{order33} and \eqref{order4}, respectively, and
\begin{align}
|u|+|u_{\xi}|+|u_{\eta}|\leq \nu_{0}.
\end{align}
Then we have
 \begin{align}
 |\partial_{\xi}A_1|&\leq C(|u_{\xi}|+|u_{\xi\xi}|+|u_{\xi\eta}|),~~~~~~~~~ |\partial_{\eta}A_1|\leq C(|u_{\eta}|+|u_{\xi\eta}|+|u_{\eta\eta}|),\\
 |\partial_{\xi}A_2|&\leq C|u_{\xi\eta}|+C|u_{\eta}|(|u_{\xi}|+|u_{\xi\xi}|),~~ |\partial_{\eta}A_2|\leq C(|u_{\eta}|+|u_{\xi\eta}|+|u_{\eta\eta}|),\\
 |\partial_{\xi}A_3|&\leq C(|u_{\xi}|+|u_{\xi\xi}|+|u_{\xi\eta}|),~~~~~~~~~|\partial_{\eta}A_3|\leq C|u_{\xi\eta}|+C|u_{\xi}|(|u_{\eta}|+|u_{\eta\eta}|),\\
 |\partial_{\xi}F| &\leq C(|u_{\xi}||u_{\xi\eta}|+|u_{\eta}||u_{\xi\xi}|+|u_{\xi}||u_{\eta}|),\\
 |\partial_{\eta}F| &\leq C(|u_{\eta}||u_{\xi\eta}|+|u_{\xi}||u_{\eta\eta}|+|u_{\xi}||u_{\eta}|),
 \end{align}
 where $C=C(\nu_0)$ is a constant depending on $\nu_0$.
 \end{Lemma}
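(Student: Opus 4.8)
The plan is to prove this by nothing more than the chain rule together with a careful reading of the structural $\mathscr{O}$-hypotheses; the statement is purely pointwise and algebraic, so no decay, energy, or PDE information enters. For any smooth matrix- or vector-valued $G=G(u,u_{\xi},u_{\eta})$ one has
\[
\partial_{\xi}G=(\partial_u G)\,u_{\xi}+(\partial_{u_{\xi}}G)\,u_{\xi\xi}+(\partial_{u_{\eta}}G)\,u_{\xi\eta},\qquad
\partial_{\eta}G=(\partial_u G)\,u_{\eta}+(\partial_{u_{\xi}}G)\,u_{\xi\eta}+(\partial_{u_{\eta}}G)\,u_{\eta\eta}.
\]
Under the standing bound $|u|+|u_{\xi}|+|u_{\eta}|\leq\nu_0$ the arguments of $G$ stay in a fixed compact neighbourhood of the origin, so every partial derivative of $G$ appearing above is bounded by a constant $C=C(\nu_0)$. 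Applying this with $G=A_1$ already yields the two estimates for $\partial_{\xi}A_1$ and $\partial_{\eta}A_1$ (no null structure is used for $A_1$, only smoothness).

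For $A_2,A_3,F$ the real work is to show that the null conditions \eqref{order22}--\eqref{order4} propagate to the partial derivatives of these functions with respect to $u$ and the ``wrong'' first-order variable. I would use Hadamard's lemma: since $A_2(u,u_{\xi},0)=0$ and $A_2$ is smooth, writing $A_2=\big(\int_0^1(\partial_{u_{\eta}}A_2)(u,u_{\xi},su_{\eta})\,ds\big)\!\cdot u_{\eta}$ and differentiating under the integral sign gives $\partial_u A_2,\ \partial_{u_{\xi}}A_2=\mathscr{O}(|u_{\eta}|)$ while $\partial_{u_{\eta}}A_2=\mathscr{O}(1)$. Substituting into the chain-rule identities and absorbing the harmless extra factor $|u_{\eta}|\leq\nu_0$ produces exactly $|\partial_{\xi}A_2|\leq C|u_{\xi\eta}|+C|u_{\eta}|(|u_{\xi}|+|u_{\xi\xi}|)$ and $|\partial_{\eta}A_2|\leq C(|u_{\eta}|+|u_{\xi\eta}|+|u_{\eta\eta}|)$; the bounds for $A_3$ follow by the symmetric argument with $\xi$ and $\eta$ interchanged, using $A_3(u,0,u_{\eta})=0$. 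For $F=\mathscr{O}(|u_{\xi}||u_{\eta}|)$ one applies Hadamard's lemma twice (once in $u_{\xi}$, once in $u_{\eta}$) to write $F=\sum_{j,k}(u_{\xi})_j(u_{\eta})_k G_{jk}$ with $G_{jk}$ smooth, whence $\partial_u F=\mathscr{O}(|u_{\xi}||u_{\eta}|)$, $\partial_{u_{\xi}}F=\mathscr{O}(|u_{\eta}|)$, $\partial_{u_{\eta}}F=\mathscr{O}(|u_{\xi}|)$; plugging these into $\partial_{\xi}F=(\partial_u F)u_{\xi}+(\partial_{u_{\xi}}F)u_{\xi\xi}+(\partial_{u_{\eta}}F)u_{\xi\eta}$ and again absorbing a factor $\leq\nu_0$ gives $|\partial_{\xi}F|\leq C(|u_{\xi}||u_{\xi\eta}|+|u_{\eta}||u_{\xi\xi}|+|u_{\xi}||u_{\eta}|)$, and symmetrically for $\partial_{\eta}F$.

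The only point that needs genuine care, rather than a mechanical computation, is making the $\mathscr{O}$-calculus rigorous when the small quantities $u_{\xi},u_{\eta}$ are themselves $\mathbb{R}^n$-valued: once one fixes the convention that ``$G=\mathscr{O}(|v|)$'' means $G$ is smooth and vanishes on $\{v=0\}$, the Hadamard integral representations and the differentiation under the integral sign used above are standard, and the componentwise bookkeeping for vectors and matrices is routine. All constants are then controlled on the compact set $\{|u|+|u_{\xi}|+|u_{\eta}|\leq\nu_0\}$, which is exactly the source of the dependence $C=C(\nu_0)$.
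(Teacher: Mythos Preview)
Your proposal is correct and is essentially the approach the paper intends: the paper offers no detailed argument, saying only that the two lemmas follow ``by the fundamental theorem of calculus, chain rule and Leibniz's rule.'' Your Hadamard integral representation is precisely the fundamental theorem of calculus in this context, and the rest is the chain rule applied componentwise, so your write-up simply spells out what the paper leaves implicit.
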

 \begin{Lemma}\label{xuyaouioo99999999}
 Assume that $A_1=A_1(u,u_{\xi},u_{\eta}), A_2=A_2(u,u_{\xi},u_{\eta})$, $A_3=A_3(u,u_{\xi},u_{\eta})$ and $F=F(u,u_{\xi},u_{\eta})$ satisfy \eqref{order1}, \eqref{order22},
\eqref{order33} and \eqref{order4}, respectively, and
\begin{align}
|u|+|u_{\xi}|+|u_{\eta}|+|Zu_{\xi}|+|Zu_{\eta}|\leq \nu_{1}.
\end{align}
Then for any multi-index $a, |a|=1$, it holds that
\begin{align}
 |Z^{a}A_1|&\leq C(|u_{\xi}|+|u_{\eta}|+|Z^au_{\xi}|+|Z^au_{\eta}|),\\
  |Z^{a}A_2|&\leq C(|u_{\eta}|+|Z^au_{\eta}|),\\
   |Z^{a}A_3|&\leq C(|u_{\xi}|+|Z^au_{\xi}|),\\
   |Z^{a}F|&\leq C(|u_{\xi}||u_{\eta}|+|Z^{a}u_{\xi}||u_{\eta}|+|Z^{a}u_{\eta}||u_{\xi}|).
 \end{align}
And for any multi-index $a,|a|= 2$,
 it holds that
 \begin{align}
 |Z^{a}A_1|&\leq C\sum_{|b|\leq 2}\big(|Z^{b}u_{\xi}|+|Z^{b}u_{\eta}|\big),\\
  |Z^{a}A_2|&\leq C\sum_{|b|\leq 2}|Z^{b}u_{\eta}|+C|u_{\eta}|\sum_{|b|\leq 2}|Z^{b}u_{\xi}|,\\
  |Z^{a}A_3|&\leq C\sum_{|b|\leq 2}|Z^{b}u_{\xi}|+C|u_{\xi}|\sum_{|b|\leq 2}|Z^{b}u_{\eta}|,\\
   |Z^{a}F|&\leq C\sum_{|b|+|c|\leq 2}|Z^{b}u_{\xi}||Z^{c}u_{\eta}|.
 \end{align}
Here $C=C(\nu_1)$ is a constant depending on $\nu_1$.
 \end{Lemma}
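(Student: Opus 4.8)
The plan is to reduce everything to two ingredients: the chain rule (Faà di Bruno) for functions composed with $(u,u_{\xi},u_{\eta})$, and the elementary fact that the null vanishing of $A_2,A_3,F$ is inherited by their partial derivatives in the non-distinguished slots. Throughout I write $D_1,D_2,D_3$ for the partial derivatives of a smooth function $G=G(u,u_{\xi},u_{\eta})$ with respect to its first, second and third vector arguments, so that the chain rule reads $\partial_{\xi}G=(D_1G)u_{\xi}+(D_2G)u_{\xi\xi}+(D_3G)u_{\xi\eta}$ and $\partial_{\eta}G=(D_1G)u_{\eta}+(D_2G)u_{\xi\eta}+(D_3G)u_{\eta\eta}$. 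The first step is a propagation lemma established by Hadamard's lemma (the fundamental theorem of calculus applied along the segment to the vanishing slot): the hypothesis \eqref{order22} means $A_2(u,u_{\xi},0)\equiv 0$, and differentiating this identity in the first two slots gives $D_1A_2=\mathscr{O}(|u_{\eta}|)$ and $D_2A_2=\mathscr{O}(|u_{\eta}|)$, while $D_3A_2=\mathscr{O}(1)$ only. Iterating, every mixed partial of $A_2$ that spares the $u_{\eta}$-slot is $\mathscr{O}(|u_{\eta}|)$, and any that differentiates it is $\mathscr{O}(1)$. The symmetric statement holds for $A_3=\mathscr{O}(|u_{\xi}|)$ by \eqref{order33}, and for $F=\mathscr{O}(|u_{\xi}||u_{\eta}|)$ by \eqref{order4} every partial sparing both the $u_{\xi}$- and $u_{\eta}$-slots is $\mathscr{O}(|u_{\xi}||u_{\eta}|)$, one hitting only the $u_{\xi}$-slot is $\mathscr{O}(|u_{\eta}|)$, one hitting only the $u_{\eta}$-slot is $\mathscr{O}(|u_{\xi}|)$, and $D_{23}F=\mathscr{O}(1)$; for $A_1$ by \eqref{order1} all first partials are $\mathscr{O}(1)$.

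For the case $|a|=1$ I would feed the first-order bounds of Lemma \ref{xuyaouioo999} into the enlarged smallness hypothesis $|u|+|u_{\xi}|+|u_{\eta}|+|Zu_{\xi}|+|Zu_{\eta}|\le\nu_{1}$. For instance the estimate $|\partial_{\xi}A_2|\le C|u_{\xi\eta}|+C|u_{\eta}|(|u_{\xi}|+|u_{\xi\xi}|)$ collapses to $C(|u_{\eta}|+|Z^{a}u_{\eta}|)$ once one recognizes $u_{\xi\eta}=Z^{a}u_{\eta}$ and absorbs the now-bounded factors $|u_{\xi}|,\,|u_{\xi\xi}|=|Z^{a}u_{\xi}|\le\nu_{1}$; the $A_1,A_3$ and $F$ bounds follow the same template, each absorption being licensed precisely because the stronger hypothesis makes all first derivatives of $u_{\xi},u_{\eta}$ bounded.

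For the case $|a|=2$, writing $Z^{a}=Z_1Z_2$ with $Z_i\in\{\partial_{\xi},\partial_{\eta}\}$ and $w=(u,u_{\xi},u_{\eta})$, the second-order chain rule gives
\[
Z^{a}G=\sum_{j}(D_jG)\,Z^{a}w_j+\sum_{j,k}(D_{jk}G)\,(Z_1w_k)(Z_2w_j).
\]
Each $Z^{a}w_j$ is a second derivative of $u$, hence of the form $Z^{b}u_{\xi}$ or $Z^{b}u_{\eta}$ with $|b|\le 2$, and each $Z_iw_j$ is bounded by $\nu_{1}$. Applying the propagation lemma to every coefficient $D_jG,\,D_{jk}G$ and then absorbing the bounded factors, each term lands in the claimed right-hand side. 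For $A_2$, coefficients that spare the $u_{\eta}$-slot carry a factor $|u_{\eta}|$ and feed $C|u_{\eta}|\sum_{|b|\le2}|Z^{b}u_{\xi}|$ after absorbing bounded derivative factors, whereas coefficients that hit the $u_{\eta}$-slot must be paired with a $Z$-derivative of that slot and so produce a factor $Z^{b}u_{\eta}$, feeding $C\sum_{|b|\le2}|Z^{b}u_{\eta}|$; the $A_1,A_3$ estimates are identical in structure, and for $F$ one checks that every surviving term retains one $u_{\xi}$-flavoured and one $u_{\eta}$-flavoured factor of total order at most two, giving $C\sum_{|b|+|c|\le2}|Z^{b}u_{\xi}||Z^{c}u_{\eta}|$.

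I expect the only genuine work to be the bookkeeping in the last step: one must verify that in the double sum no term escapes the prescribed form, that is, that the distinguished null factor is never completely differentiated away. This is exactly where the propagation lemma is indispensable, since it guarantees that the factor $|u_{\eta}|$ (respectively $|u_{\xi}|$, respectively $|u_{\xi}||u_{\eta}|$) survives in every coefficient that does not explicitly differentiate the corresponding slot, while the only coefficients allowed to be merely $\mathscr{O}(1)$ are those that differentiate that slot and are therefore automatically multiplied by a $Z$-derivative of the same slot. Everything else is routine absorption of factors bounded by $\nu_{1}$, so the constants $C=C(\nu_{1})$ arise as claimed.
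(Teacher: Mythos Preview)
Your approach is correct and matches the paper's, which offers no details beyond the remark that the lemma follows from ``the fundamental theorem of calculus, chain rule and Leibniz's rule.'' Your propagation lemma via Hadamard is exactly the fundamental-theorem-of-calculus ingredient, and the rest is the chain-rule/Leibniz bookkeeping you outline.

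One small slip in your plan for $|a|=1$: the shortcut of reading off Lemma~\ref{xuyaouioo999} and absorbing does not go through uniformly. For $Z^{a}=\partial_{\eta}$ applied to $A_{2}$ (and symmetrically $Z^{a}=\partial_{\xi}$ applied to $A_{3}$), Lemma~\ref{xuyaouioo999} only gives $|\partial_{\eta}A_{2}|\le C(|u_{\eta}|+|u_{\xi\eta}|+|u_{\eta\eta}|)$, and the bare $|u_{\xi\eta}|$ term is not dominated by $|u_{\eta}|+|Z^{a}u_{\eta}|=|u_{\eta}|+|u_{\eta\eta}|$. You need instead the sharper information from your own propagation lemma, namely $D_{2}A_{2}=\mathscr{O}(|u_{\eta}|)$, so that the chain-rule term $(D_{2}A_{2})u_{\xi\eta}$ is bounded by $C|u_{\eta}|\,|u_{\xi\eta}|\le C\nu_{1}|u_{\eta}|$. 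Since you already have the propagation lemma in hand and use it correctly for $|a|=2$, this is a matter of applying it directly in the $|a|=1$ case as well rather than routing through Lemma~\ref{xuyaouioo999}.
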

The following pointwise estimates will be used frequently in the next section.
 \begin{Lemma}\label{xuyao8DD8899}
 Let $u$ be a smooth function with sufficient decay at the spatial infinity. Then it holds that
\begin{align}\label{224iijuio}
\|u(t,\cdot)\|_{L^{\infty}(\mathbb{R})}&\leq C
E_1^{1/2}(u(t)),
\end{align}
\begin{align}\label{xuyao217}
\|\langle\xi\rangle^{1+\delta}u_{\xi}\|_{L_{x}^{{\infty}}(\mathbb{R})}+\|\langle \eta\rangle^{1+\delta}u_{\eta}\|_{L_{x}^{{\infty}}(\mathbb{R})}&\leq C
E_1^{1/2}(u(t))+C
E_2^{1/2}(u(t)),
\end{align}
\begin{align}\label{xuyao218}
&\|\langle\xi\rangle^{1+\delta}(u_{\xi\xi}+u_{\eta\xi})\|_{L_{x}^{{\infty}}(\mathbb{R})}
+\|\langle \eta\rangle^{1+\delta}(u_{\eta\eta}+u_{\xi\eta})\|_{L_{x}^{{\infty}}(\mathbb{R})}\nonumber\\
&\leq C
E^{1/2}(u(t))+C
\widetilde{E_3}^{1/2}(u(t)),
\end{align}
and
\begin{align}\label{xuyao219}
\|\langle\eta\rangle^{-\frac{1+\delta}{2}}\langle\xi\rangle^{1+\delta}u_{\xi}\|_{L^2_{s}L_{x}^{{\infty}}}+\|\langle\xi\rangle^{-\frac{1+\delta}{2}}\langle \eta\rangle^{1+\delta}u_{\eta}\|_{L^2_{s}L_{x}^{{\infty}}}&\leq C
\mathcal {E}_1^{1/2}(u(t))+C
\mathcal {E}_2^{1/2}(u(t)),
\end{align}
\begin{align}\label{xuyao220}
&\|\langle\eta\rangle^{-\frac{1+\delta}{2}}\langle\xi\rangle^{1+\delta}(u_{\xi\xi}+u_{\eta\xi})\|_{L^2_{s}L_{x}^{{\infty}}}
+\|\langle\xi\rangle^{-\frac{1+\delta}{2}}\langle \eta\rangle^{1+\delta}(u_{\eta\eta}+u_{\xi\eta})\|_{L^2_{s}L_{x}^{{\infty}}}\nonumber\\
&\leq C
\mathcal {E}^{1/2}(u(t))+C
\widetilde{\mathcal {E}_3}^{1/2}(u(t)).
\end{align}
 \end{Lemma}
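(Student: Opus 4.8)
The statement to prove is Lemma~\ref{xuyao8DD8899}, a collection of pointwise (Sobolev-type) estimates in one space dimension. The plan is to reduce everything to the basic one-dimensional inequality $\|f\|_{L^\infty_x}^2 \le 2\|f\|_{L^2_x}\|f'\|_{L^2_x}$ (or more precisely $\|f\|_{L^\infty_x(\mathbb{R})}^2 \le \int_{\mathbb{R}} |\partial_x(f^2)|\,dx \le 2\|f\|_{L^2_x}\|\partial_x f\|_{L^2_x}$), applied to cleverly chosen weighted quantities, and then convert $\partial_x$-derivatives into $\partial_\xi, \partial_\eta$ derivatives via $\partial_x = \tfrac12(\partial_\xi - \partial_\eta)$.

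First, for \eqref{224iijuio}: apply the basic inequality to $f = \langle\xi\rangle^{1+\delta} u$ — no, better to write $u(t,x) = \int_{-\infty}^x u_x\,dy$ is not enough since $u$ need not decay without weights; instead use $\|u\|_{L^\infty}^2 \lesssim \|u\|_{L^2}\|u_x\|_{L^2}$ won't directly give $E_1$. The cleaner route: $u(t,x)^2 = -\int_x^{\infty} \partial_y(u^2)\,dy = -2\int_x^\infty u u_y\,dy$, and split $u_y = \tfrac12(u_\xi - u_\eta)$; along the line $t$ fixed, $\xi = (t+y)/2$ ranges over an unbounded interval, and $\langle\xi\rangle^{1+\delta}|u_\xi|$ is in $L^2_y$ with norm controlled by $E_1^{1/2}$, while $u$ itself is bounded by $\langle\xi\rangle^{-(1+\delta)}$ times... one must be slightly careful, but the standard trick is to integrate from the direction in which the weight is large. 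Concretely, using $\langle\xi\rangle^{-2(1+\delta)} \in L^1$ and Cauchy--Schwarz one gets $|u(t,x)| \lesssim E_1^{1/2}(u(t))$. For \eqref{xuyao217}, apply the basic $L^\infty$ inequality to $g = \langle\xi\rangle^{1+\delta} u_\xi$: then $g_x = \tfrac12(g_\xi - g_\eta)$ involves $\partial_\xi(\langle\xi\rangle^{1+\delta}u_\xi)$ and $\partial_\eta(\langle\xi\rangle^{1+\delta}u_\xi) = \langle\xi\rangle^{1+\delta}u_{\xi\eta}$ (note $\partial_\eta\langle\xi\rangle = 0$), and $\partial_\xi(\langle\xi\rangle^{1+\delta}u_\xi) = \langle\xi\rangle^{1+\delta}u_{\xi\xi} + (\text{lower order})\,u_\xi$; both the $E_1$ and $E_2$ terms appear, giving the claimed bound. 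The estimates \eqref{xuyao218} are analogous but now $g$ is a second-order weighted derivative, e.g. $\langle\xi\rangle^{1+\delta}(u_{\xi\xi}+u_{\eta\xi}) = \langle\xi\rangle^{1+\delta}\partial_\xi u_\xi + \langle\xi\rangle^{1+\delta}\partial_\eta u_\xi$; differentiating in $x$ produces third-order terms, and here the point is that the combination $u_{\xi\xi}+u_{\eta\xi}$ is chosen precisely so that all resulting third-order terms are among those controlled by $E_3$ or $\widetilde{E_3}$ — i.e. we never need a "bad" mixed derivative weighted by the wrong factor.

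For the space-time versions \eqref{xuyao219} and \eqref{xuyao220}, the strategy is identical but one takes an extra $L^2$ in time: apply the basic pointwise-in-$x$ inequality for each fixed $s$, then integrate $\int_0^t (\cdots)\,ds$ and use Cauchy--Schwarz in $s$ as well, matching the weight $\langle\eta\rangle^{-\frac{1+\delta}{2}}$ (resp.\ $\langle\xi\rangle^{-\frac{1+\delta}{2}}$) so that squaring reproduces exactly the integrand in the definitions \eqref{soapj8jnjjk}--\eqref{xuyaoyu78tt8999} of $\mathcal{E}_1, \mathcal{E}_2, \mathcal{E}_3, \widetilde{\mathcal{E}_3}$. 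The role of the half-power weight is to absorb the cross term produced when distributing $\langle\xi\rangle^{1+\delta}\langle\eta\rangle^{-\frac{1+\delta}{2}}$ across the two factors $f$ and $\partial_x f$ in the product inequality $\|f\|_{L^\infty_x}^2 \lesssim \|f\|_{L^2_x}\|\partial_x f\|_{L^2_x}$.

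The only real subtlety — the step I expect to require the most care — is bookkeeping the weights correctly when $\partial_x = \tfrac12(\partial_\xi - \partial_\eta)$ hits a weighted quantity like $\langle\xi\rangle^{1+\delta}(\cdots)$: the $\partial_\xi$ piece creates a factor $\partial_\xi\langle\xi\rangle^{1+\delta} = (1+\delta)\xi\langle\xi\rangle^{\delta-1}$, which is $\mathscr{O}(\langle\xi\rangle^{\delta})$, i.e.\ it is actually \emph{dominated} by $\langle\xi\rangle^{1+\delta}$, so such terms are harmless; and crucially the $\partial_\eta$ piece annihilates the $\langle\xi\rangle$ weight entirely. Making sure that in \eqref{xuyao218} and \eqref{xuyao220} every third-order term that appears is genuinely one of those weighted in the definition of $E$ (resp.\ $\widetilde{E_3}$, $\mathcal{E}$, $\widetilde{\mathcal{E}_3}$) — in particular that a term like $\langle\xi\rangle^{1+\delta}u_{\eta\eta\xi}$, which is exactly the $\widetilde{E_3}$-entry, shows up rather than the uncontrolled $\langle\xi\rangle^{1+\delta}u_{\eta\eta\eta}$ — is the heart of the matter, and it is forced by the specific choice of the derivative combinations $u_{\xi\xi}+u_{\eta\xi}$ and $u_{\eta\eta}+u_{\xi\eta}$ appearing in the statement.
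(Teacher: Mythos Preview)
Your approach is essentially the paper's: for \eqref{xuyao217}--\eqref{xuyao220} you apply the one-dimensional Sobolev inequality to the weighted quantity and use that $|\partial_x \langle\xi\rangle^{1+\delta}| \lesssim \langle\xi\rangle^{1+\delta}$ and $|\partial_x(\langle\eta\rangle^{-\frac{1+\delta}{2}}\langle\xi\rangle^{1+\delta})| \lesssim \langle\eta\rangle^{-\frac{1+\delta}{2}}\langle\xi\rangle^{1+\delta}$, which is exactly the fact the paper records. Your bookkeeping of which third-order terms land in $E_3$ versus $\widetilde{E_3}$ in \eqref{xuyao218} and \eqref{xuyao220} is correct and is precisely what the paper leaves implicit.

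For \eqref{224iijuio} you are overcomplicating. You dismiss $u(t,x) = \int_{-\infty}^x u_x\,dy$ as ``not enough since $u$ need not decay without weights,'' but the hypothesis is precisely that $u$ has sufficient decay at spatial infinity, so this is fine; the paper uses exactly $\|u\|_{L^\infty_x} \leq \|u_x\|_{L^1_x} \leq \|u_\xi\|_{L^1_x} + \|u_\eta\|_{L^1_x}$ and then Cauchy--Schwarz against $\langle\xi\rangle^{-1-\delta}, \langle\eta\rangle^{-1-\delta} \in L^2_x$. Your $u^2$ route can be made to work (it gives $\|u\|_{L^\infty}^2 \lesssim \|u\|_{L^\infty} E_1^{1/2}$, hence the claim) but is unnecessarily roundabout.
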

 \begin{proof}
  It follows from the fundamental theorem of calculus and H\"{o}lder inequality that
\begin{align}
&\|u(t,\cdot)\|_{L^{\infty}(\mathbb{R})}\leq \|u_x(t,\cdot)\|_{L_{x}^{1}(\mathbb{R})}\leq \|u_{\xi}\|_{L_{x}^{1}(\mathbb{R})}+\|u_{\eta}\|_{L_{x}^{1}(\mathbb{R})}\nonumber\\
&\leq \|\langle \xi\rangle^{-1-\delta}\|_{L_{x}^{2}(\mathbb{R})}\|\langle \xi\rangle^{1+\delta}u_{\xi}\|_{L_{x}^{2}(\mathbb{R})}+\|\langle \eta\rangle^{-1-\delta}\|_{L_{x}^{2}(\mathbb{R})}\|\langle \eta\rangle^{1+\delta}u_{\eta}\|_{L_{x}^{2}(\mathbb{R})}\nonumber\\
&\leq C\big(\|\langle \xi\rangle^{1+\delta}u_{\xi}\|_{L_{x}^{2}(\mathbb{R})}+\|\langle \eta\rangle^{1+\delta}u_{\eta}\|_{L_{x}^{2}(\mathbb{R})}\big)\leq C
E_1^{1/2}(u(t)).
\end{align}
While \eqref{xuyao217}, \eqref{xuyao218}, \eqref{xuyao219} and \eqref{xuyao220} can be proved by Sobolev embedding $H^1(\mathbb{R})\hookrightarrow L^{\infty}(\mathbb{R})$ and the following fact
\begin{align}
|\partial_{x}\langle\xi\rangle^{1+\delta}|\leq C\langle\xi\rangle^{1+\delta},~ |\partial_{x}(\langle\eta\rangle^{-\frac{1+\delta}{2}}\langle\xi\rangle^{1+\delta})|\leq C \langle\eta\rangle^{-\frac{1+\delta}{2}}\langle\xi\rangle^{1+\delta},\\
|\partial_{x}\langle\eta\rangle^{1+\delta}|\leq C\langle\eta\rangle^{1+\delta},~ |\partial_{x}(\langle\xi\rangle^{-\frac{1+\delta}{2}}\langle\eta\rangle^{1+\delta})|\leq C \langle\xi\rangle^{-\frac{1+\delta}{2}}\langle\eta\rangle^{1+\delta}.
\end{align}
 \end{proof}
 The following lemma will paly a key role in the proof of global existence part of Theorem \ref{mainthm}.
 \begin{Lemma}\label{keyn9078p}
Let $u$ be a classical solution to the system \eqref{quasiwave} satisfying null conditions \eqref{order1}--\eqref{order4}. Assume that
\begin{align}\label{tyuu78888}
\varepsilon_1=\sup_{0\leq s\leq t}E^{1/2}(u(s))
\end{align}
is sufficiently small. Then we have
\begin{align}\label{xuyaoede333}
\sup_{0\leq s\leq t}\widetilde{E_3}(u(s))\leq C\sup_{0\leq s\leq t}E(u(s))
\end{align}
and
\begin{align}\label{xuyaojk9899}
\widetilde{\mathcal{E}_3}(u(t))\leq C\mathcal {E}(u(t)).
\end{align}
 \end{Lemma}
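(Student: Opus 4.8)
The plan is to extract the quantities $u_{\eta\eta\xi}$ and $u_{\xi\xi\eta}$ appearing in $\widetilde{E_3}$ (resp. $\widetilde{\mathcal{E}_3}$) directly from the equation \eqref{quasiwave} rather than by running a weighted energy estimate on them. First I would differentiate \eqref{quasiwave} by $\partial_\eta$ to obtain an expression for $u_{\xi\eta\eta}=u_{\eta\eta\xi}$, and by $\partial_\xi$ to obtain one for $u_{\xi\eta\xi}=u_{\xi\xi\eta}$. Schematically,
\begin{align}\label{prop-diffeq}
u_{\eta\eta\xi}=\partial_\eta\big(A_1u_{\xi\eta}+A_2u_{\xi\xi}+A_3u_{\eta\eta}+F\big),
\end{align}
and after moving the term $A_1 u_{\xi\eta\eta}$ (which comes from $\partial_\eta(A_1 u_{\xi\eta})$) to the left-hand side, we get $(I-A_1)u_{\eta\eta\xi}$ equal to a sum of terms. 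Since $\varepsilon_1$ is small and $A_1=\mathscr{O}(|u|+|u_\xi|+|u_\eta|)$ by \eqref{order1}, the matrix $I-A_1$ is invertible with uniformly bounded inverse, so $u_{\eta\eta\xi}$ is controlled by the right-hand side. The whole game is then to bound $\langle\xi\rangle^{1+\delta}$ times that right-hand side in $L^2_x$ by $E^{1/2}(u(t))$ (resp. its space-time weighted analogue by $\mathcal{E}^{1/2}(u(t))$).

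Next I would go through the terms on the right of \eqref{prop-diffeq} one at a time, using Lemma \ref{xuyaouioo999} for the derivatives of the coefficients and Lemma \ref{xuyao8DD8899} for the pointwise bounds. The crucial point is that every resulting term carries a factor of $u_\eta$ or $u_{\eta\eta}$ or $u_{\xi\eta}$ with a "good" decaying weight. For instance $\partial_\eta(A_2 u_{\xi\xi})=(\partial_\eta A_2)u_{\xi\xi}+A_2 u_{\xi\xi\eta}$; here $A_2=\mathscr{O}(|u_\eta|)$ supplies a factor $\langle\eta\rangle^{-1-\delta}$ after pairing with the weight in $E_1$, so $A_2u_{\xi\xi\eta}$ is estimated by putting $\langle\eta\rangle^{1+\delta}u_\eta$ in $L^\infty_x$ via \eqref{xuyao217}, leaving $\langle\xi\rangle^{1+\delta}u_{\xi\xi\eta}=\langle\xi\rangle^{1+\delta}Z^a u_\xi$ with $|a|=2,\ a_2\neq0$ in $L^2_x$, which is exactly a piece of $E_3$ — note it is $E_3$, not $\widetilde{E_3}$, so no circularity. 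Similarly $\partial_\eta(A_3 u_{\eta\eta})$ produces $A_3 u_{\eta\eta\eta}$ with $A_3=\mathscr{O}(|u_\xi|)$; the factor $\langle\xi\rangle^{-1-\delta}$ from $A_3$ does not directly cancel the $\langle\xi\rangle^{1+\delta}$ weight, but one puts $\langle\xi\rangle^{1+\delta}u_\xi\in L^\infty_x$ (again \eqref{xuyao217}) and keeps $u_{\eta\eta\eta}=Z^a u_\eta$, $|a|=2$, $a_2\neq 0$, in $L^2_x$ with weight $\langle\eta\rangle^{1+\delta}$, which lives in $E_3$. The terms from $\partial_\eta(A_1 u_{\xi\eta})$ other than the one already moved left, and the $\partial_\eta F$ term (for which Lemma \ref{xuyaouioo999} gives $|\partial_\eta F|\leq C(|u_\eta||u_{\xi\eta}|+|u_\xi||u_{\eta\eta}|+|u_\xi||u_\eta|)$), are handled the same way: one factor goes to $L^\infty_x$ with its natural weight, absorbing a power of $\varepsilon_1$, and the other stays in $L^2_x$ as a component of $E$. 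Summing and absorbing the small constant proves \eqref{xuyaoede333}. The estimate for $u_{\xi\xi\eta}$ is symmetric under $\xi\leftrightarrow\eta$.

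For \eqref{xuyaojk9899} the argument is structurally identical but carried out in the mixed norm $L^2_s L^2_x$ on $[0,t]\times\mathbb{R}$: whenever a factor is placed in $L^\infty_x$ I would instead place it in $L^2_s L^\infty_x$ using \eqref{xuyao219} (and, where a second-order quantity must go to $L^\infty$, \eqref{xuyao220}), and the remaining factor stays in $L^\infty_s L^2_x$ bounded by $E^{1/2}(u(t))$; the product then lies in $L^2_s L^2_x$ and is bounded by $\mathcal{E}^{1/2}(u(t))\cdot E^{1/2}(u(t))\lesssim \mathcal{E}^{1/2}(u(t))$ after using smallness of $\varepsilon_1$, yielding $\widetilde{\mathcal{E}_3}(u(t))\leq C\mathcal{E}(u(t))$. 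The main obstacle — really the only subtle point — is bookkeeping the weights: one must check in each term that the weight coming from the null-structure gain on the coefficient ($\langle\eta\rangle^{-1-\delta}$ from $A_2$ or $F$, $\langle\xi\rangle^{-1-\delta}$ from $A_3$ or $F$) is correctly matched against the weight $\langle\xi\rangle^{1+\delta}$ or $\langle\eta\rangle^{1+\delta}$ that $\widetilde{E_3}$ attaches, so that after throwing the "wrong-weight" factor into $L^\infty$ the leftover is genuinely a term of $E$ or $\mathcal{E}$ and never of $\widetilde{E_3}$ or $\widetilde{\mathcal{E}_3}$ itself; this is exactly the compatibility the authors allude to, and it is why differentiating the equation (rather than commuting and integrating by parts) is the right move.
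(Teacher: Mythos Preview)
Your proposal is correct and follows essentially the same route as the paper. The paper likewise differentiates \eqref{quasiwave} once (obtaining identities for $u_{\eta\eta\xi}$ and $u_{\xi\xi\eta}$), estimates the weighted $L^2_x$ norm of each term using Lemma \ref{xuyaouioo999}, the null conditions \eqref{order1}--\eqref{order4}, and the pointwise bounds of Lemma \ref{xuyao8DD8899}, and then absorbs the single self-referential term coming from $A_1 u_{\eta\eta\xi}$; your device of inverting $I-A_1$ and the paper's device of leaving $C\varepsilon_1\widetilde{E_3}^{1/2}$ on the right and absorbing it are equivalent. One small refinement for the space--time estimate \eqref{xuyaojk9899}: it is not always the low-order factor that goes into $L^2_sL^\infty_x$; for some terms (e.g.\ $A_3 u_{\xi\eta\eta}$ in the equation for $u_{\xi\xi\eta}$) the paper instead places the coefficient in $L^\infty_{s,x}$ (bounded by $\sup_s E^{1/2}$) and the third-order factor in $L^2_{s,x}$ (a genuine piece of $\mathcal{E}_3$), so your blanket prescription should be relaxed to ``one factor in $L^2_s$, the other in $L^\infty_s$'' with the roles chosen term by term.
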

 \begin{proof}
From the system \eqref{quasiwave}, we have
 \begin{align}\label{xuyaojkk4555l}
 u_{\eta\eta\xi}&=A_1u_{\eta\eta\xi}
+A_2u_{\xi\eta\xi}+A_3u_{\eta\eta\eta}+\partial_{\eta}A_1u_{\eta\xi}
+\partial_{\eta}A_2u_{\xi\xi}+\partial_{\eta}A_3u_{\eta\eta}+\partial_{\eta}F
 \end{align}
 and
\begin{align}\label{dieryuuj}
 u_{\xi\xi\eta}&=A_1u_{\xi\xi\eta}
+A_2u_{\xi\xi\xi}+A_3u_{\xi\eta\eta}+\partial_{\xi}A_1u_{\xi\eta}
+\partial_{\xi}A_2u_{\xi\xi}+\partial_{\xi}A_3u_{\eta\eta}+\partial_{\xi}F.
 \end{align}
 \par
We first estimate ${\widetilde{E_3}}(u(t))$.
It follows from \eqref{xuyaojkk4555l}, \eqref{224iijuio}, Lemma \ref{xuyaouioo999} and Sobolev embedding $H^1(\mathbb{R})\hookrightarrow L^{\infty}(\mathbb{R})$ that
 \begin{align}\label{san12222}
&\|\langle \xi\rangle^{1+\delta}u_{\eta\eta\xi}\|_{L_{x}^2(\mathbb{R})}\nonumber\\
&\leq C\|A_1\|_{L_{x}^{\infty}(\mathbb{R})}\|\langle \xi\rangle^{1+\delta}u_{\eta\eta\xi}\|_{L_{x}^2(\mathbb{R})}+C\|A_2\|_{L_{x}^{\infty}(\mathbb{R})}\|\langle \xi\rangle^{1+\delta}u_{\xi\eta\xi}\|_{L_{x}^2(\mathbb{R})}\nonumber\\
&+C\|\langle \xi\rangle^{1+\delta}A_3\|_{L_{x}^{\infty}(\mathbb{R})}\|u_{\eta\eta\eta}\|_{L_{x}^2(\mathbb{R})}+C\|\partial_{\eta}A_1\|_{L_{x}^{\infty}(\mathbb{R})}\|\langle \xi\rangle^{1+\delta}u_{\eta\xi}\|_{L_{x}^2(\mathbb{R})}\nonumber\\
&+C\|\partial_{\eta}A_2\|_{L_{x}^{\infty}(\mathbb{R})}\|\langle \xi\rangle^{1+\delta}u_{\xi\xi}\|_{L_{x}^2(\mathbb{R})}
+C\|\langle \xi\rangle^{1+\delta}\partial_{\eta}A_3\|_{L_{x}^{2}(\mathbb{R})}\|u_{\eta\eta}\|_{L_{x}^{\infty}(\mathbb{R})}+\|\langle \xi\rangle^{1+\delta}\partial_{\eta}F\|_{L_{x}^2(\mathbb{R})}\nonumber\\
&\leq C\|u+u_{\xi}+u_{\eta}\|_{L_{x}^{\infty}(\mathbb{R})}\|\langle \xi\rangle^{1+\delta}u_{\eta\eta\xi}\|_{L_{x}^2(\mathbb{R})}+C\|u_{\eta}\|_{L_{x}^{\infty}(\mathbb{R})}\|\langle \xi\rangle^{1+\delta}u_{\xi\eta\xi}\|_{L_{x}^2(\mathbb{R})}\nonumber\\
&+C\|\langle \xi\rangle^{1+\delta}u_{\xi}\|_{L_{x}^{\infty}(\mathbb{R})}\|u_{\eta\eta\eta}\|_{L_{x}^2(\mathbb{R})}+C\|u_{\eta}+u_{\xi\eta}+u_{\eta\eta}\|_{L_{x}^{\infty}(\mathbb{R})}\|\langle \xi\rangle^{1+\delta}u_{\eta\xi}\|_{L_{x}^2(\mathbb{R})}\nonumber\\
&+C\|u_{\eta}+u_{\xi\eta}+u_{\eta\eta}\|_{L_{x}^{\infty}(\mathbb{R})}\|\langle \xi\rangle^{1+\delta}u_{\xi\xi}\|_{L_{x}^2(\mathbb{R})}\nonumber\\
&
+C\|\langle \xi\rangle^{1+\delta}|u_{\eta\xi}|+\langle \xi\rangle^{1+\delta}|u_{\xi}|(|u_{\eta}|+|u_{\eta\eta}|)\|_{L_{x}^2(\mathbb{R})}\|u_{\eta\eta}\|_{L_{x}^{\infty}(\mathbb{R})}
\nonumber\\
&+C\|u_{\eta}\|_{L_{x}^{\infty}(\mathbb{R})}\|\langle \xi\rangle^{1+\delta}u_{\eta\xi}\|_{L_{x}^2(\mathbb{R})}
+C\|\langle \xi\rangle^{1+\delta}u_{\xi}\|_{L_{x}^{\infty}(\mathbb{R})}\|u_{\eta\eta}\|_{L_{x}^2(\mathbb{R})}+C\|u_{\eta}\|_{L_{x}^{\infty}(\mathbb{R})}\|\langle \xi\rangle^{1+\delta}u_{\xi}\|_{L_{x}^2(\mathbb{R})}\nonumber\\
&\leq CE^{1/2}(u(t))\|\langle \xi\rangle^{1+\delta}u_{\eta\eta\xi}\|_{L_{x}^2(\mathbb{R})}+C\big(E(u(t))+E^{3/2}(u(t))\big).
 \end{align}
 Similarly, by \eqref{dieryuuj}, \eqref{224iijuio}, Lemma \ref{xuyaouioo999} and Sobolev embedding $H^1(\mathbb{R})\hookrightarrow L^{\infty}(\mathbb{R})$, we can also obtain
 \begin{align}\label{san1222ddd4442}
\|\langle \eta\rangle^{1+\delta}u_{\xi\xi\eta}\|_{L_{x}^2(\mathbb{R})}\leq CE^{1/2}(u(t))\|\langle \eta\rangle^{1+\delta}u_{\xi\xi\eta}\|_{L_{x}^2(\mathbb{R})}+C\big(E(u(t))+E^{3/2}(u(t))\big).
 \end{align}
 Thus, thanks to \eqref{san12222} and \eqref{san1222ddd4442}, we have
  \begin{align}\label{san1222ddd4448882}
{\widetilde{E_3}}^{1/2}(u(t))&\leq CE^{1/2}(u(t)){\widetilde{E_3}}^{1/2}(u(t))+C\big(E(u(t))+E^{3/2}(u(t))\big)\nonumber\\
&\leq C\varepsilon_1{\widetilde{E_3}}^{1/2}(u(t))+C\big(\varepsilon_1+\varepsilon_1^2)\big)E^{1/2}(u(t)).
 \end{align}
If $\varepsilon_1$ is sufficiently small, we can get
\eqref{xuyaoede333}.\par
Now we will estimate $\widetilde{\mathcal{E}_3}(u(t))$.
From \eqref{dieryuuj}, \eqref{224iijuio}, Lemma \ref{xuyaouioo999} and Sobolev embedding $H^1(\mathbb{R})\hookrightarrow L^{\infty}(\mathbb{R})$,  we can see
  \begin{align}\label{xyui9987}
&\|\langle\xi\rangle^{-\frac{1+\delta}{2}}\langle \eta\rangle^{1+\delta}u_{\xi\xi\eta}\|_{L_{s,x}^2}\nonumber\\
&\leq C\|A_1\|_{L_{s,x}^{\infty}}\|\langle\xi\rangle^{-\frac{1+\delta}{2}}\langle \eta\rangle^{1+\delta}u_{\xi\xi\eta}\|_{L_{s,x}^2}+C\|\langle\xi\rangle^{-\frac{1+\delta}{2}}\langle \eta\rangle^{1+\delta}A_2\|_{L_{s}^{2}L_{x}^{\infty}}\|u_{\xi\xi\xi}\|_{L_{s}^{\infty}L_{x}^{2}}\nonumber\\
&+C\|A_3\|_{L_{s,x}^{\infty}}\|\langle\xi\rangle^{-\frac{1+\delta}{2}}\langle \eta\rangle^{1+\delta}u_{\xi\eta\eta}\|_{L_{s,x}^2}+C\|\partial_{\xi}A_1\|_{L_{s,x}^{\infty}}\|\langle\xi\rangle^{-\frac{1+\delta}{2}}\langle \eta\rangle^{1+\delta}u_{\xi\eta}\|_{L_{s,x}^2}\nonumber\\
&+C\|\langle\xi\rangle^{-\frac{1+\delta}{2}}\langle \eta\rangle^{1+\delta}\partial_{\xi}A_2\|_{L_{s,x}^{2}}\|u_{\xi\xi}\|_{L_{s,x}^{\infty}}
+C\|\partial_{\xi}A_3\|_{L_{s,x}^{\infty}}\|\langle\xi\rangle^{-\frac{1+\delta}{2}}\langle \eta\rangle^{1+\delta}u_{\eta\eta}\|_{L_{s,x}^{2}}
\nonumber\\
&+\|\langle\xi\rangle^{-\frac{1+\delta}{2}}\langle \eta\rangle^{1+\delta}\partial_{\xi}F\|_{L_{s,x}^2}\nonumber\\
&\leq C\|u+u_{\xi}+u_{\eta}\|_{L_{s,x}^{\infty}}\|\langle\xi\rangle^{-\frac{1+\delta}{2}}\langle \eta\rangle^{1+\delta}u_{\xi\xi\eta}\|_{L_{s,x}^2}+C\|\langle\xi\rangle^{-\frac{1+\delta}{2}}\langle \eta\rangle^{1+\delta}u_{\eta}\|_{L_{s}^{2}L_{x}^{\infty}}\|u_{\xi\xi\xi}\|_{L_{s}^{\infty}L_{x}^{2}}\nonumber\\
&+C\|u_{\xi}\|_{L_{s,x}^{\infty}}\|\langle\xi\rangle^{-\frac{1+\delta}{2}}\langle \eta\rangle^{1+\delta}u_{\xi\eta\eta}\|_{L_{s,x}^2}+C\|u_{\xi}+u_{\xi\xi}+u_{\xi\eta}\|_{L_{s,x}^{\infty}}\|\langle\xi\rangle^{-\frac{1+\delta}{2}}\langle \eta\rangle^{1+\delta}u_{\xi\eta}\|_{L_{s,x}^2}\nonumber\\
&+C\|\langle\xi\rangle^{-\frac{1+\delta}{2}}\langle \eta\rangle^{1+\delta}(|u_{\xi\eta}|+|u_{\eta}|(|u_{\xi}|+|u_{\xi\xi}|))\|_{L_{s,x}^{2}}\|u_{\xi\xi}\|_{L_{s,x}^{\infty}}\nonumber\\
&
+C\|u_{\xi}+u_{\xi\xi}+u_{\xi\eta}\|_{L_{s,x}^{\infty}}\|\langle\xi\rangle^{-\frac{1+\delta}{2}}\langle \eta\rangle^{1+\delta}u_{\eta\eta}\|_{L_{s,x}^{2}}
\nonumber\\
&+C\|u_{\xi}\|_{L_{s,x}^{\infty}}\|\langle\xi\rangle^{-\frac{1+\delta}{2}}\langle \eta\rangle^{1+\delta}u_{\xi\eta}\|_{L_{s,x}^2}
+C\|\langle\xi\rangle^{-\frac{1+\delta}{2}}\langle \eta\rangle^{1+\delta}u_{\eta}\|_{L_{s}^{2}L_{x}^{\infty}}\|u_{\xi\xi}\|_{L_{s}^{\infty}L_{x}^{2}}\nonumber\\
&+C\|u_{\xi}\|_{L_{s,x}^{\infty}}\|\langle\xi\rangle^{-\frac{1+\delta}{2}}\langle \eta\rangle^{1+\delta}u_{\eta}\|_{L_{s,x}^2}\nonumber\\
&\leq C\sup_{0\leq s\leq t}E^{1/2}(u(s))\|\langle\xi\rangle^{-\frac{1+\delta}{2}}\langle \eta\rangle^{1+\delta}u_{\xi\xi\eta}\|_{L_{s,x}^2}\nonumber\\
&+C\big(\sup_{0\leq s\leq t}E^{1/2}(u(s))+\sup_{0\leq s\leq t}E(u(s))\big){{\mathcal{E}}}^{1/2}(u(t)).
 \end{align}
  Similarly, by \eqref{xuyaojkk4555l}, \eqref{224iijuio}, Lemma \ref{xuyaouioo999} and Sobolev embedding $H^1(\mathbb{R})\hookrightarrow L^{\infty}(\mathbb{R})$, we can also get
   \begin{align}\label{xyuiddd9987}
&\|\langle \eta\rangle^{-\frac{1+\delta}{2}}\langle \xi\rangle^{1+\delta}u_{\eta\eta\xi}\|_{L_{s,x}^2}\nonumber\\
&\leq C\sup_{0\leq s\leq t}E^{1/2}(u(s))\|\langle \eta\rangle^{-\frac{1+\delta}{2}}\langle \xi\rangle^{1+\delta}u_{\eta\eta\xi}\|_{L_{s,x}^2}\nonumber\\
&+C\big(\sup_{0\leq s\leq t}E^{1/2}(u(s))+\sup_{0\leq s\leq t}E(u(s))\big){{\mathcal{E}}}^{1/2}(u(t)).
 \end{align}
 By \eqref{xyui9987} and \eqref{xyuiddd9987} we can obtain
  \begin{align}
&{\widetilde{\mathcal{E}_3}}^{1/2}(u(t))\nonumber\\
&\leq C\sup_{0\leq s\leq t}E^{1/2}(u(s)){\widetilde{\mathcal{E}_3}}^{1/2}(u(t))+C\big(\sup_{0\leq s\leq t}E^{1/2}(u(s))+\sup_{0\leq s\leq t}E(u(s))\big){{\mathcal{E}}}^{1/2}(u(t))\nonumber\\
&\leq C\varepsilon_1{\widetilde{\mathcal{E}_3}}^{1/2}(u(t))+C\big(\varepsilon_1+\varepsilon_1^2\big){{\mathcal{E}}}^{1/2}(u(t)).
 \end{align}
 If $\varepsilon_1$ is sufficiently small, we can get
\eqref{xuyaojk9899}.
 \end{proof}
 The following lemma will be used in the proof of asymptotic behavior part of Theorem \ref{mainthm}. For the proof of this lemma, we refer the reader to Lemma {\rm{6.12}} in Katayama {\rm {\cite{MR3729247}}}, where high space dimension case is also considered.
 \begin{Lemma}\label{xuyaoodddd897}
 If $G\in L^1(\mathbb{R}^{+};L^2(\mathbb{R}))$, i.e.,
 \begin{align}\label{xuyo8fggg970}
 \int_0^{+\infty}\|G(t,\cdot)\|_{L^2(\mathbb{R})}dt<+\infty,
 \end{align}
Then the global solution to
 \begin{align}
 u_{\xi\eta}=G
 \end{align}
is asymptotically free in the energy sense.
 \end{Lemma}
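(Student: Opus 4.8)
The plan is to write down the asymptotic traveling‑wave profile explicitly from the characteristic representation of $u$ and then to control the energy of the remainder $u-v$ by Minkowski's integral inequality together with \eqref{xuyo8fggg970}. There is also an equivalent, shorter route in Cartesian coordinates: rewrite $u_{\xi\eta}=G$ as $u_{tt}-u_{xx}=G$, use Duhamel's formula and the fact that the free propagator $U_0(t)$ is an isometry of $\dot{H}^1(\mathbb{R})\times L^2(\mathbb{R})$ to show that $(u_0,u_1)+\int_0^{+\infty}U_0(-s)(0,G(s,\cdot))\,ds$ converges in $\dot{H}^1\times L^2$, set $(v_0,v_1)$ equal to this limit, and estimate $\|(u-v)(t)\|_{\dot{H}^1\times L^2}=\big\|\int_t^{+\infty}U_0(-s)(0,G(s,\cdot))\,ds\big\|\le\int_t^{+\infty}\|G(s,\cdot)\|_{L^2}\,ds\to0$. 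I will sketch the first route, which stays in the $(\xi,\eta)$–picture used throughout the paper.

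Since $u\in C(\mathbb{R}^{+};\dot{H}^1(\mathbb{R}))\cap C^1(\mathbb{R}^{+};L^2(\mathbb{R}))$ solves $u_{\xi\eta}=G$, integrating $\partial_{\eta}u_{\xi}=G$ along $\{\xi=\mathrm{const}\}$ and $\partial_{\xi}u_{\eta}=G$ along $\{\eta=\mathrm{const}\}$ starting from the slice $t=0$ (where $\eta=-\xi$, $x=2\xi$) gives, for $t=\xi+\eta\ge0$,
\begin{align*}
u_{\xi}(\xi,\eta)=u_{\xi}(0,2\xi)+\int_{-\xi}^{\eta}G(\xi,\sigma)\,d\sigma,\qquad u_{\eta}(\xi,\eta)=u_{\eta}(0,-2\eta)+\int_{-\eta}^{\xi}G(\sigma,\eta)\,d\sigma,
\end{align*}
where $G$ is evaluated at the corresponding spacetime point. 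I would then define the candidate profiles
\begin{align*}
\mathcal{F}(\xi):=u_{\xi}(0,2\xi)+\int_{-\xi}^{+\infty}G(\xi,\sigma)\,d\sigma,\qquad \mathcal{G}(\eta):=u_{\eta}(0,-2\eta)+\int_{-\eta}^{+\infty}G(\sigma,\eta)\,d\sigma,
\end{align*}
let $v$ be the unique solution of $v_{\xi\eta}=0$ with $v_{\xi}=\mathcal{F}(\xi)$, $v_{\eta}=\mathcal{G}(\eta)$ (that is, $v=\Phi(\xi)+\Psi(\eta)$ with $\Phi'=\mathcal{F}$, $\Psi'=\mathcal{G}$), and set $(v_0,v_1):=(v,v_t)|_{t=0}$. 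A preliminary application of Minkowski's inequality and \eqref{xuyo8fggg970} shows that the defining integrals converge absolutely for a.e.\ $\xi$ (resp.\ $\eta$); the membership $\mathcal{F},\mathcal{G}\in L^2(\mathbb{R})$, hence $(v_0,v_1)\in\dot{H}^1(\mathbb{R})\times L^2(\mathbb{R})$, will be read off from the estimate below at $t=0$ together with $u_{\xi}(0,\cdot),u_{\eta}(0,\cdot)\in L^2(\mathbb{R})$.

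The decisive step is the following computation. On the slice $\{t=\tau\}$ the point with null coordinate $\eta$ has $\xi=\tau-\eta$, $x=\tau-2\eta$, so by the formulas above
\begin{align*}
(u_{\eta}-v_{\eta})(\tau,x)=u_{\eta}(\tau-\eta,\eta)-\mathcal{G}(\eta)=-\int_{\tau-\eta}^{+\infty}G(\sigma,\eta)\,d\sigma=-\int_{\tau}^{+\infty}G(s-\eta,\eta)\,ds,
\end{align*}
after substituting $\sigma=s-\eta$, using $\sigma\ge\tau-\eta\Leftrightarrow s\ge\tau$ and the fact that $s\mapsto(s-\eta,\eta)$ parametrizes the slice $\{t=s\}$, so that $\|G(s-\cdot,\cdot)\|_{L^2_{d\eta}}=2^{-1/2}\|G(s,\cdot)\|_{L^2_x}$. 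Minkowski's integral inequality then yields
\begin{align*}
\|u_{\eta}(\tau,\cdot)-v_{\eta}(\tau,\cdot)\|_{L^2_x}=\sqrt{2}\,\Big\|\int_{\tau}^{+\infty}G(s-\eta,\eta)\,ds\Big\|_{L^2_{d\eta}}\le\int_{\tau}^{+\infty}\|G(s,\cdot)\|_{L^2_x}\,ds\longrightarrow0\quad(\tau\to+\infty)
\end{align*}
by \eqref{xuyo8fggg970}, and the analogous computation with $\xi$ and $\eta$ interchanged gives $\|u_{\xi}(\tau,\cdot)-v_{\xi}(\tau,\cdot)\|_{L^2_x}\to0$. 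Adding the two proves \eqref{profile0}; taking $\tau=0$ in the same bounds shows $v_{\xi}(0,\cdot),v_{\eta}(0,\cdot)\in L^2(\mathbb{R})$, i.e.\ $\mathcal{F},\mathcal{G}\in L^2(\mathbb{R})$, which retroactively legitimizes the construction of $v$ and of $(v_0,v_1)$.

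The argument is essentially soft. The only points requiring care are the a.e.\ convergence of the profile integrals and the interchange of the $s$‑integration with the spatial $L^2$‑norm, both handled by Minkowski's integral inequality, together with the elementary bookkeeping of the characteristic foliation that relates $L^2_x$‑norms on time slices to $L^2_{d\eta}$‑ and $L^2_{d\xi}$‑norms; I do not expect any genuine obstacle beyond these.
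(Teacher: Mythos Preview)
Your proposal is correct. The paper does not actually prove this lemma but refers to Lemma~6.12 in Katayama~\cite{MR3729247}; the standard argument there, valid in any space dimension, is precisely your ``shorter route'': Duhamel's formula together with the fact that the free propagator is an isometry on $\dot{H}^1\times L^2$, so that $\|(u-v,u_t-v_t)(t)\|_{\dot{H}^1\times L^2}\le\int_t^{+\infty}\|G(s,\cdot)\|_{L^2}\,ds\to0$. Your primary null-coordinate argument is an equally valid one-dimensional specialization that makes the asymptotic profiles $\mathcal{F},\mathcal{G}$ explicit and bypasses the abstract propagator formalism; the two are equivalent once one observes that $(v_\xi,v_\eta)=(\mathcal{F}(\xi),\mathcal{G}(\eta))$ is exactly the free evolution of the data $(v_0,v_1)$ produced by the Duhamel construction. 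The only technical remark is that the a.e.\ pointwise convergence of the profile integrals is not actually needed for the conclusion: it suffices to interpret $\int_\tau^{+\infty}G(s-\eta,\eta)\,ds$ as a Bochner integral in $L^2_{d\eta}$, which is how Minkowski's inequality already reads it, and this immediately yields both $\mathcal{F},\mathcal{G}\in L^2(\mathbb{R})$ and the vanishing of the energy difference.
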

 Finally, for the convenience, we will introduce some notations concerning the weight functions used in the weighted energy estimates in the next section.
Fix $0<\delta<1$. Set
\begin{align}\label{xuyao00099}
\phi(x)=\langle x\rangle^{2+2\delta}.
\end{align}
It is easy to verify that
\begin{align}\label{20p06}
|\phi'(x)|\leq 4\langle x\rangle^{1+2\delta}.
\end{align}
Let
\begin{align}
q(x)=\int_{-\infty}^{x}{\langle \rho\rangle^{-(1+\delta)}}d\rho
\end{align}
and
\begin{align}
\psi(x)=e^{-q(x)}.
\end{align}
We can verify that
\begin{align}
\psi'(x)=-\psi(x)\langle x\rangle^{-(1+\delta)}.
\end{align}
We note that there exists a positive constant $c$ such that
\begin{align}\label{rg56}
c^{-1}\leq \psi(x)\leq c,
\end{align}
thus it holds that
\begin{align}\label{here456}
c^{-1}\langle x\rangle^{-(1+\delta)}\leq -\psi'(x)\leq c\langle x\rangle^{-(1+\delta)}.
\end{align}
\section{Proof of Theorem \ref{mainthm}}\label{xhzuyo898}
Now we will prove Theorem \ref{mainthm}. We first prove the global existence part of Theorem \ref{mainthm} by some bootstrap argument. Assume that $u$ is a classical solution to the Cauchy problem \eqref{quasiwave}--\eqref{hj89i}. We will show that there exist positive constants $\varepsilon_0$ and $A$ such that
\begin{align}\label{rgty6788888}
\sup_{0\leq s\leq t}E(u(s))+\mathcal {E}(u(t))\leq A^2\varepsilon^2
\end{align}
 under
the assumption
\begin{align}\label{dddddd}
\sup_{0\leq s\leq t}E(u(s))+\mathcal {E}(u(t))\leq 4A^2\varepsilon^2,
\end{align}
where $0<\varepsilon\leq \varepsilon_0$.
Then based on  estimate \eqref{rgty6788888} and Lemma \ref{xuyaoodddd897},
the proof of asymptotic behavior part of Theorem \ref{mainthm} can also be given.
\subsection{Low order energy estimates}
We first estimate $\sup_{0\leq s\leq t}E_2(u(s))$ and $\mathcal {E}_2(u(t))$. $\sup_{0\leq s\leq t}E_1(u(s))$ and $\mathcal {E}_1(u(t))$ can be estimated similarly. \par
For any multi-index $a=(a_1,a_2), |a|=1$, note that $Z^{a}u$ satisfies
\begin{align}\label{quasiwave8989}
Z^{a}u_{\xi\eta}=A_1Z^{a}u_{\xi\eta}+A_2Z^{a}u_{\xi\xi}+A_3Z^{a}u_{\eta\eta}+Z^{a}A_1u_{\xi\eta}+Z^{a}A_2u_{\xi\xi}+Z^{a}A_3u_{\eta\eta}+Z^{a}F.
\end{align}
Multiply $2\psi(\eta)\phi(\xi)Z^au^{T}_{\xi}$ on both sides of \eqref{quasiwave8989}. Noting the symmetry of $A_1$, by Leibniz's rule we have
\begin{align}\label{xhjyio999}
&\big(\psi(\eta)\phi(\xi)|Z^{a}u_{\xi}|^2\big)_{\eta}-\psi'(\eta)\phi(\xi)|Z^{a}u_{\xi}|^2\nonumber\\
&=\big(\psi(\eta)\phi(\xi)Z^{a}u_{\xi}^{{T}}A_1Z^{a}u_{\xi}\big)_{\eta}-\psi'(\eta)\phi(\xi)Z^{a}u_{\xi}^{{T}}A_1Z^{a}u_{\xi}\nonumber\\
&-\psi(\eta)\phi(\xi)Z^{a}u_{\xi}^{{T}}\partial_{\eta}A_1Z^{a}u_{\xi}
+2\psi(\eta)\phi(\xi)Z^{a}u^{T}_{\xi}G_a,
\end{align}
where
\begin{align}\label{xuyaodeii99}
G_a&=A_2Z^{a}u_{\xi\xi}+A_3Z^{a}u_{\eta\eta}+Z^{a}A_1u_{\xi\eta}+Z^{a}A_2u_{\xi\xi}+Z^{a}A_3u_{\eta\eta}+Z^{a}F.
\end{align}
Similarly, multiply $2\psi(\xi)\phi(\eta)Z^{a}u^{{T}}_{\eta}$ on both sides of \eqref{quasiwave}. The symmetry of $A_1$ and Leibniz's rule also imply
\begin{align}\label{xhjyi7666o}
&\big(\psi(\xi)\phi(\eta)|Z^{a}u_{\eta}|^2\big)_{\xi}-\psi'(\xi)\phi(\eta)|Z^{a}u_{\eta}|^2\nonumber\\
&=\big(\psi(\xi)\phi(\eta)Z^{a}u_{\eta}^{{T}}A_1Z^{a}u_{\eta}\big)_{\xi}-\psi'(\xi)\phi(\eta)Z^{a}u_{\eta}^{{T}}A_1Z^{a}u_{\eta}\nonumber\\
&-\psi(\xi)\phi(\eta)Z^{a}u_{\eta}^{{T}}\partial_{\xi}A_1Z^{a}u_{\eta}+2\psi(\xi)\phi(\eta)Z^{a}u^{{T}}_{\eta}{G_a}.
\end{align}
\par
Integrating on $[0,t]\times\mathbb{R}$ on both sides of \eqref{xhjyio999} and \eqref{xhjyi7666o}, we conclude from the fundamental theorem of calculus that
\begin{align}\label{hj78999888}
&\int_{\mathbb{R}}\big(e_2(t,x)+\widetilde{e}_2(t,x)\big)dx+\int_0^{t}\!\!\int_{\mathbb{R}}p_2(s,x)dxds\nonumber\\
&=\int_{\mathbb{R}}\big(e_2(0,x)+\widetilde{e}_2(0,x)\big)dx+\int_0^{t}\!\!\int_{\mathbb{R}}q_2(s,x)dxds\nonumber\\
&+2\int_0^{t}\!\!\int_{\mathbb{R}}\psi(\eta)\phi(\xi)Z^{a}u^{T}_{\xi}G_adxds+2\int_0^{t}\!\!\int_{\mathbb{R}}\psi(\xi)\phi(\eta)Z^{a}u^{{T}}_{\eta}{G_a}dxds,
\end{align}
where
\begin{align}
e_2=\psi(\eta)\phi(\xi)|Z^{a}u_{\xi}|^2+\psi(\xi)\phi(\eta)|Z^{a}u_{\eta}|^2,
\end{align}
\begin{align}
\widetilde{e}_2=-\psi(\eta)\phi(\xi)Z^{a}u_{\xi}^{{T}}A_1Z^{a}u_{\xi}
-\psi(\xi)\phi(\eta)Z^{a}u_{\eta}^{{T}}A_1Z^{a}u_{\eta},
\end{align}
\begin{align}
p_2=-\psi'(\eta)\phi(\xi)|Z^{a}u_{\xi}|^2-\psi'(\xi)\phi(\eta)|Z^{a}u_{\eta}|^2
\end{align}
and
\begin{align}
q_2&=-\psi'(\eta)\phi(\xi)Z^{a}u_{\xi}^{{T}}A_1Z^{a}u_{\xi}
-\psi(\eta)\phi(\xi)Z^{a}u_{\xi}^{{T}}\partial_{\eta}A_1Z^{a}u_{\xi}\nonumber\\
&-\psi'(\xi)\phi(\eta)Z^{a}u_{\eta}^{{T}}A_1Z^{a}u_{\eta}
-\psi(\xi)\phi(\eta)Z^{a}u_{\eta}^{{T}}\partial_{\xi}A_1Z^{a}u_{\eta}.
\end{align}
\par
In view of \eqref{xuyao00099} and \eqref{rg56}, we can see
\begin{align}
c^{-1}e_2(t,x)\leq |\langle \xi\rangle^{1+\delta}Z^{a}u_{\xi}|^2+|\langle \eta\rangle^{1+\delta}Z^{a}u_{\eta}|^2\leq ce_2(t,x).
\end{align}
It follows from \eqref{xuyao00099}, \eqref{rg56}, \eqref{order1}, \eqref{224iijuio} and Sobolev embedding $H^1(\mathbb{R})\hookrightarrow L^{\infty}(\mathbb{R})$ that
\begin{align}
|\widetilde{e}_2(t,x)|&\leq C\langle \xi\rangle^{2+2\delta}|Z^{a}u_{\xi}^{{T}}A_1Z^{a}u_{\xi}|
+C\langle \eta\rangle^{2+2\delta}|Z^{a}u_{\eta}^{{T}}A_1Z^{a}u_{\eta}|\nonumber\\
&\leq C|\langle \xi\rangle^{1+\delta}Z^{a}u_{\xi}|^2(|u|+|u_{\xi}|+|u_{\eta}|)
+C|\langle \eta\rangle^{1+\delta}Z^{a}u_{\eta}|^2(|u|+|u_{\xi}|+|u_{\eta}|)\nonumber\\
&\leq C\big(E^{1/2}_{1}(u(t))+E^{1/2}_{2}(u(t))\big) \big(|\langle \xi\rangle^{1+\delta}Z^{a}u_{\xi}|^2+ |\langle \eta\rangle^{1+\delta}Z^{a}u_{\eta}|^2\big)\nonumber\\
&\leq CE^{1/2}(u(t)) \big(|\langle \xi\rangle^{1+\delta}Z^{a}u_{\xi}|^2+ |\langle \eta\rangle^{1+\delta}Z^{a}u_{\eta}|^2\big).
\end{align}
Thus, for small solutions, we can get
\begin{align}\label{siof00766hm}
(2c)^{-1}\big(e_2(t,x)+\widetilde{e}_2(t,x)\big)\leq |\langle \xi\rangle^{1+\delta}Z^{a}u_{\xi}|^2+|\langle \eta\rangle^{1+\delta}Z^{a}u_{\eta}|^2\leq 2c\big(e_2(t,x)+\widetilde{e}_2(t,x)\big).
\end{align}
From \eqref{xuyao00099} and \eqref{here456}, it follows that
\begin{align}\label{opp9pppp888}
c^{-1}p_2(t,x)\leq |\langle\eta\rangle^{-\frac{1+\delta}{2}}\langle \xi\rangle^{1+\delta}Z^{a}u_{\xi}|^2+|\langle\xi\rangle^{-\frac{1+\delta}{2}}\langle \eta\rangle^{1+\delta}Z^{a}u_{\eta}|^2\leq cp_2(t,x).
\end{align}
According to \eqref{xuyao00099}, \eqref{rg56}, \eqref{here456}, \eqref{order1} and Lemma \ref{xuyaouioo999}, we have
\begin{align}\label{qguji}
|q_2(t,x)|&\leq C\langle \eta\rangle^{-(1+\delta)}\langle \xi\rangle^{2+2\delta}|Z^{a}u_{\xi}|^2(|u|+|u_{\xi}|+|u_{\eta}|)+C\langle \xi\rangle^{2+2\delta}|Z^{a}u_{\xi}|^2(|u_{\eta}|+|Zu_{\eta}|)\nonumber\\
&+ C\langle \xi\rangle^{-(1+\delta)}\langle \eta\rangle^{2+2\delta}|Z^{a}u_{\eta}|^2(|u|+|u_{\xi}|+|u_{\eta}|)+C\langle \eta\rangle^{2+2\delta}|Z^{a}u_{\eta}|^2(|u_{\xi}|+|Zu_{\xi}|)\nonumber\\
&\leq C|\langle \eta\rangle^{-\frac{1+\delta}{2}}\langle \xi\rangle^{1+\delta}Z^{a}u_{\xi}|^2(|u|+|u_{\xi}|+|u_{\eta}|)\nonumber\\
&+C|\langle \eta\rangle^{-\frac{1+\delta}{2}}\langle \xi\rangle^{1+\delta}Z^{a}u_{\xi}|^2(|\langle \eta\rangle^{1+\delta}u_{\eta}|+|\langle \eta\rangle^{1+\delta} Zu_{\eta}|)\nonumber\\
&+ C|\langle \xi\rangle^{-\frac{1+\delta}{2}}\langle \eta\rangle^{1+\delta}Z^{a}u_{\eta}|^2(|u|+|u_{\xi}|+|u_{\eta}|)\nonumber\\
&+C|\langle \xi\rangle^{-\frac{1+\delta}{2}}\langle \eta\rangle^{1+\delta}Z^{a}u_{\eta}|^2(|\langle \xi\rangle^{1+\delta}u_{\xi}|+|\langle \xi\rangle^{1+\delta} Zu_{\xi}|).
\end{align}
By \eqref{xuyaodeii99}, \eqref{order22}, \eqref{order33} and Lemma \ref{xuyaouioo99999999}, we can obtain
\begin{align}\label{pointwiseG}
|G_{a}|&\leq C|Z^{a}u_{\xi\xi}||u_{\eta}|+C|Z^{a}u_{\eta\eta}||u_{\xi}|+C(|u_{\xi}|+|u_{\eta}|+|Z^au_{\xi}|+|Z^au_{\eta}|)|u_{\xi\eta}|\nonumber\\
&+C(|u_{\eta}|+|Z^au_{\eta}|)|u_{\xi\xi}|+C(|u_{\xi}|+|Z^au_{\xi}|)|u_{\eta\eta}|\nonumber\\
&+C(|u_{\xi}||u_{\eta}|+|Z^{a}u_{\xi}||u_{\eta}|+|Z^{a}u_{\eta}||u_{\xi}|).
\end{align}
\par
Thanks to \eqref{hj78999888}, \eqref{siof00766hm} and \eqref{opp9pppp888}, we see
\begin{align}\label{xddd89uii888}
&\sup_{0\leq s\leq t}E_2(u(s))+ \mathcal{E}_2(u(t))\nonumber\\
&\leq C E_2(u(0))+C\sum_{|a|=1}\int_0^{t}\|q_2(s,\cdot)\|_{L^1(\mathbb{R})}ds\nonumber\\
&+C\sum_{|a|=1}\int_0^{t}\|\langle \xi\rangle^{2+2\delta}Z^{a}u_{\xi}G_a\|_{L_{x}^1(\mathbb{R})}ds+C\sum_{|a|=1}\int_0^{t}\|\langle \eta\rangle^{2+2\delta}Z^{a}u_{\eta}{G_a}\|_{L_{x}^1(\mathbb{R})}ds.
\end{align}
It follows from \eqref{qguji}, H\"{o}lder inequality, Lemma \ref{xuyao8DD8899}, Sobolev embedding $H^1(\mathbb{R})\hookrightarrow L^{\infty}(\mathbb{R})$ and Lemma \ref{keyn9078p}
 that
\begin{align}\label{qguji8900}
&\int_0^{t}\|q_2(s,\cdot)\|_{L^1(\mathbb{R})}ds\nonumber\\
&\leq C\|\langle \eta\rangle^{-\frac{1+\delta}{2}}\langle \xi\rangle^{1+\delta}Z^{a}u_{\xi}\|_{L^2_{s,x}}^2(\|u\|_{L^{\infty}_{s,x}}+\|u_{\xi}\|_{L^{\infty}_{s,x}}+\|u_{\eta}\|_{L^{\infty}_{s,x}})\nonumber\\
&+C\|\langle \eta\rangle^{-\frac{1+\delta}{2}}\langle \xi\rangle^{1+\delta}Z^{a}u_{\xi}\|_{L^2_{s,x}}^2(\|\langle \eta\rangle^{1+\delta}u_{\eta}\|_{L^{\infty}_{s,x}}+\|\langle \eta\rangle^{1+\delta} Zu_{\eta}\|_{L^{\infty}_{s,x}})\nonumber\\
&+ C\|\langle \xi\rangle^{-\frac{1+\delta}{2}}\langle \eta\rangle^{1+\delta}Z^{a}u_{\eta}\|_{L^2_{s,x}}^2(\|u\|_{L^{\infty}_{s,x}}+\|u_{\xi}\|_{L^{\infty}_{s,x}}+\|u_{\eta}\|_{L^{\infty}_{s,x}})\nonumber\\
&+C\|\langle \xi\rangle^{-\frac{1+\delta}{2}}\langle \eta\rangle^{1+\delta}Z^{a}u_{\eta}\|_{L^{2}_{s,x}}^2(\|\langle \xi\rangle^{1+\delta}u_{\xi}\|_{L^{\infty}_{s,x}}+\|\langle \xi\rangle^{1+\delta} Zu_{\xi}\|_{L^{\infty}_{s,x}})\nonumber\\
&\leq C\big(\sup_{0\leq s\leq t}{E^{1/2}}(u(s))+\sup_{0\leq s\leq t}{\widetilde{E_3}}^{1/2}(u(s))\big)\mathcal {E}_2(u(t))\nonumber\\
&\leq C\sup_{0\leq s\leq t}{E^{1/2}}(u(s))\mathcal {E}_2(u(t)).
\end{align}
By H\"{o}lder inequality, it is easy to see that
\begin{align}\label{xuyaoo8967990}
&\int_0^{t}\|\langle \xi\rangle^{2+2\delta}Z^{a}u_{\xi}G_a\|_{L_{x}^1(\mathbb{R})}ds\nonumber\\
&\leq C\|\langle \eta\rangle^{-\frac{1+\delta}{2}}\langle \xi\rangle^{1+\delta}
Z^{a}u_{\xi}\|_{L^2_{s,x}}\|\langle \eta\rangle^{\frac{1+\delta}{2}}\langle \xi\rangle^{1+\delta}
G_a\|_{L^2_{s,x}}\nonumber\\
&\leq \mathcal {E}_2^{1/2}(u(t))\|\langle \eta\rangle^{\frac{1+\delta}{2}}\langle \xi\rangle^{1+\delta}
G_a\|_{L^2_{s,x}}.
\end{align}
By \eqref{pointwiseG}, Lemma \ref{xuyao8DD8899} and Lemma \ref{keyn9078p}, we can get
%
\begin{align}\label{dddf678899}
&\|\langle \eta\rangle^{\frac{1+\delta}{2}}\langle \xi\rangle^{1+\delta}
G_a\|_{L^2_{s,x}}\nonumber\\
&\leq C\|\langle \eta\rangle^{-\frac{1+\delta}{2}}\langle \xi\rangle^{1+\delta}Z^{a}u_{\xi\xi}\|_{L^2_{s,x}}\|\langle \eta\rangle^{1+\delta}u_{\eta}\|_{L^{\infty}_{s,x}}\nonumber\\
&+C\|\langle \eta\rangle^{1+\delta}Z^{a}u_{\eta\eta}\|_{L^{\infty}_{s}L^{2}_{x}}\|\langle \eta\rangle^{-\frac{1+\delta}{2}}\langle \xi\rangle^{1+\delta}u_{\xi}\|_{L^{2}_{s}L^{\infty}_{x}}\nonumber\\
&+C\|\langle \eta\rangle^{-\frac{1+\delta}{2}}\langle \xi\rangle^{1+\delta}(|u_{\xi}|+|Z^au_{\xi}|)\|_{L^2_{s,x}}\|\langle \eta\rangle^{1+\delta}u_{\xi\eta}\|_{L^{\infty}_{s,x}}\nonumber\\
&+C\||\langle \eta\rangle^{1+\delta}(|u_{\eta}|+|Z^au_{\eta}|)|\|_{L^{\infty}_{s,x}}\|\langle \eta\rangle^{-\frac{1+\delta}{2}}\langle \xi\rangle^{1+\delta}u_{\eta\xi}\|_{L^2_{s,x}}\nonumber\\
&+C\|\langle \eta\rangle^{1+\delta}(|u_{\eta}|+|Z^au_{\eta}|)\|_{L^{\infty}_{s,x}} \|\langle \eta\rangle^{-\frac{1+\delta}{2}}\langle \xi\rangle^{1+\delta}
u_{\xi\xi}\|_{L^2_{s,x}}\nonumber\\
&+C\|\langle \eta\rangle^{-\frac{1+\delta}{2}}\langle \xi\rangle^{1+\delta}
(|u_{\xi}|+|Z^au_{\xi}|)\|_{L^2_{s,x}}\|\langle \eta\rangle^{1+\delta}u_{\eta\eta}\|_{L^{\infty}_{s,x}}\nonumber\\
&+C\|\langle \eta\rangle^{1+\delta}(|u_{\eta}|+|Z^au_{\eta}|)\|_{L^2_{s,x}} \|\langle \eta\rangle^{-\frac{1+\delta}{2}}\langle \xi\rangle^{1+\delta}
u_{\xi}\|_{L^{\infty}_{s,x}}\nonumber\\
&+C\|\langle \eta\rangle^{-\frac{1+\delta}{2}}\langle \xi\rangle^{1+\delta}
(|u_{\xi}|+|Z^au_{\xi}|)\|_{L^2_{s,x}}\|\langle \eta\rangle^{1+\delta}u_{\eta}\|_{L^{\infty}_{s,x}}\nonumber\\
&\leq C\big(\sup_{0\leq s\leq t}{E^{1/2}}(u(s))+\sup_{0\leq s\leq t}{\widetilde{E_3}}^{1/2}(u(s))\big)\mathcal {E}^{1/2}_3(u(t))\nonumber\\
&\leq C\sup_{0\leq s\leq t}{E^{1/2}}(u(s))\mathcal {E}^{1/2}_3(u(t)).
\end{align}
It follows from \eqref{xuyaoo8967990} and \eqref{dddf678899} that
\begin{align}\label{xuyaoo896799988880}
\int_0^{t}\|\langle \xi\rangle^{2+2\delta}Z^{a}u_{\xi}G_a\|_{L_{x}^1(\mathbb{R})}ds
&\leq  C\sup_{0\leq s\leq t}{E^{1/2}}(u(s))\mathcal {E}_3(u(t)).
\end{align}
Similarly, we can also get
\begin{align}\label{xuyaoo876yuu96799988880}
\int_0^{t}\|\langle \eta\rangle^{2+2\delta}Z^{a}u_{\eta}{G_a}\|_{L_{x}^1(\mathbb{R})}ds
&\leq  C\sup_{0\leq s\leq t}{E^{1/2}}(u(s))\mathcal {E}_3(u(t)).
\end{align}
Consequently, the combination of \eqref{xddd89uii888}, \eqref{qguji8900}, \eqref{xuyaoo896799988880} and \eqref{xuyaoo876yuu96799988880} implies
\begin{align}\label{dijie3455888}
\sup_{0\leq s\leq t}E_2(u(s))+ \mathcal{E}_2(u(t))
\leq C E_2(u(0))+C\sup_{0\leq s\leq t}{E^{1/2}}(u(s))\mathcal {E}(u(t)).
\end{align}
\par
By a similar (but simpler) argument, we can also show
\begin{align}\label{dijie3455}
\sup_{0\leq s\leq t}E_1(u(s))+ \mathcal{E}_1(u(t))
\leq C E_1(u(0))+C\sup_{0\leq s\leq t}{E^{1/2}}(u(s))\mathcal {E}(u(t)).
\end{align}
\par
\subsection{High order energy estimates}
Now we will estimate  $\sup_{0\leq s\leq t}E_3(u(s))$ and $\mathcal {E}_3(u(t))$.\par
 For any multi-index $a=(a_1,a_2)$, $|a|=2, a_1\neq 0$, Leibniz's rule gives
\begin{align}\label{quasiwavegao7888}
Z^{a}u_{\xi\eta}&=A_1Z^{a}u_{\xi\eta}+
+A_2Z^{a}u_{\xi\xi}+A_3Z^{a}u_{\eta\eta}+H_a,
\end{align}
where
\begin{align}\label{ghjuiioo99}
H_a&=\sum_{\substack{c+d=a\\ c\neq 0}}\lambda_{cd}Z^{c}A_1Z^{d}u_{\xi\eta}+\sum_{\substack{c+d=a\\ c\neq 0}}\lambda_{cd}Z^{c}A_2Z^{d}u_{\xi\xi}+\sum_{\substack{c+d=a\\ c\neq 0}}\lambda_{cd}Z^{c}A_3Z^{d}u_{\eta\eta}+Z^aF,
\end{align}
$\lambda_{cd}$ are some constants.
Multiply $2\psi(\eta)\phi(\xi)Z^{a}u^{T}_{\xi}$ on both sides of \eqref{quasiwavegao7888}. Noting the symmetry of $A_1,A_2$ and $A_3$, by Leibniz's rule we can get
\begin{align}\label{lemkey17888}
&\big(\psi(\eta)\phi(\xi)|Z^{a}u_{\xi}|^2\big)_{\eta}-\psi'(\eta)\phi(\xi)|Z^{a}u_{\xi}|^2\nonumber\\
&=\big(\psi(\eta)\phi(\xi)Z^{a}u_{\xi}^{{T}}A_1Z^{a}u_{\xi}\big)_{\eta}-\psi'(\eta)\phi(\xi)Z^{a}u_{\xi}^{{T}}A_1Z^{a}u_{\xi}
-\psi(\eta)\phi(\xi)Z^{a}u_{\xi}^{{T}}\partial_{\eta}A_1Z^{a}u_{\xi}\nonumber\\
&+\big(\psi(\eta)\phi(\xi)Z^{a}u_{\xi}^{{T}}A_2Z^{a}u_{\xi}\big)_{\xi}-\psi(\eta)\phi'(\xi)Z^{a}u_{\xi}^{{T}}A_2Z^{a}u_{\xi}
-\psi(\eta)\phi(\xi)Z^{a}u_{\xi}^{{T}}\partial_{\xi}A_2Z^{a}u_{\xi}\nonumber\\
&+\big(2\psi(\eta)\phi(\xi)Z^{a}u_{\xi}^{{T}}A_3Z^{a}u_{\eta}\big)_{\eta}-2\psi'(\eta)\phi(\xi)Z^{a}u_{\xi}^{{T}}A_3Z^{a}u_{\eta}
-2\psi(\eta)\phi(\xi)Z^{a}u_{\xi}^{{T}}\partial_{\eta}A_3Z^{a}u_{\eta}\nonumber\\
&-\big(\psi(\eta)\phi(\xi)Z^{a}u_{\eta}^{{T}}A_3Z^{a}u_{\eta}\big)_{\xi}+\psi(\eta)\phi'(\xi)Z^{a}u_{\eta}^{{T}}A_3Z^{a}u_{\eta}
+\psi(\eta)\phi(\xi)Z^{a}u_{\eta}^{{T}}\partial_{\xi}A_3Z^{a}u_{\eta}\nonumber\\
&+2\psi(\eta)\phi(\xi)Z^{a}u_{\xi}^{{T}}H_a.
\end{align}
\par
Integrating on $[0,t]\times\mathbb{R}$ on both sides of \eqref{lemkey17888}, by the fundamental theorem of calculus, we have
\begin{align}\label{hj78999}
&\int_{\mathbb{R}}e_3(t,x)dx+\int_0^{t}\!\!\int_{\mathbb{R}}p_3(s,x)dxds\nonumber\\
&=\int_{\mathbb{R}}\big(e_3(0,x)+\widetilde{e}_3(0,x)\big)dx-\int_{\mathbb{R}}\widetilde{e}_3(t,x)dx\int_0^{t}\!\!\int_{\mathbb{R}}q_3(s,x)dxds\nonumber\\
&+2\int_0^{t}\!\!\int_{\mathbb{R}}\psi(\eta)\phi(\xi)Z^{a}u^{{T}}_{\xi}H_adxds,
\end{align}
where
\begin{align}
e_3=\psi(\eta)\phi(\xi)|Z^{a}u_{\xi}|^2,~~p_3=-\psi'(\eta)\phi(\xi)|Z^{a}u_{\xi}|^2,
\end{align}
\begin{align}
\widetilde{e}_3&=-\psi(\eta)\phi(\xi)Z^{a}u_{\xi}^{{T}}A_1Z^{a}u_{\xi}-\psi(\eta)\phi(\xi)Z^{a}u_{\xi}^{{T}}A_2Z^{a}u_{\xi}\nonumber\\
&
-2\psi(\eta)\phi(\xi)Z^{a}u_{\xi}^{{T}}A_3Z^{a}u_{\eta}+\psi(\eta)\phi(\xi)Z^{a}u_{\eta}^{{T}}A_3Z^{a}u_{\eta}
\end{align}
and
\begin{align}
q_3&=-\psi'(\eta)\phi(\xi)Z^{a}u_{\xi}^{{T}}A_1Z^{a}u_{\xi}
-\psi(\eta)\phi(\xi)Z^{a}u_{\xi}^{{T}}\partial_{\eta}A_1Z^{a}u_{\xi}\nonumber\\
&
-\psi(\eta)\phi'(\xi)Z^{a}u_{\xi}^{{T}}A_2Z^{a}u_{\xi}
-\psi(\eta)\phi(\xi)Z^{a}u_{\xi}^{{T}}\partial_{\xi}A_2Z^{a}u_{\xi}\nonumber\\
&-2\psi'(\eta)\phi(\xi)Z^{a}u_{\xi}^{{T}}A_3Z^{a}u_{\eta}
-2\psi(\eta)\phi(\xi)Z^{a}u_{\xi}^{{T}}\partial_{\eta}A_3Z^{a}u_{\eta}\nonumber\\
&
+\psi(\eta)\phi'(\xi)Z^{a}u_{\eta}^{{T}}A_3Z^{a}u_{\eta}
+\psi(\eta)\phi(\xi)Z^{a}u_{\eta}^{{T}}\partial_{\xi}A_3Z^{a}u_{\eta}.
\end{align}
\par
In view of \eqref{xuyao00099} and \eqref{rg56}, we have
\begin{align}\label{333333}
c^{-1}e_3(t,x)\leq |\langle \xi\rangle^{1+\delta}Z^{a}u_{\xi}|^2\leq ce_3(t,x).
\end{align}
By \eqref{xuyao00099} and \eqref{here456}, we can obtain
\begin{align}\label{opp9pppp}
c^{-1}p_3(t,x)\leq |\langle\eta\rangle^{-\frac{1+\delta}{2}}\langle \xi\rangle^{1+\delta}Z^{a}u_{\xi}|^2\leq cp_3(t,x).
\end{align}
It follows from \eqref{xuyao00099}, \eqref{rg56}, \eqref{order1}, \eqref{order22} and \eqref{order33} that
\begin{align}\label{By3555}
|\widetilde{e}_3(t,x)|&\leq C\langle \xi\rangle^{2+2\delta}|Z^{a}u_{\xi}^{{T}}A_1Z^{a}u_{\xi}|+C\langle \xi\rangle^{2+2\delta}|Z^{a}u_{\xi}^{{T}}A_2Z^{a}u_{\xi}|
\nonumber\\
&+C\langle \xi\rangle^{2+2\delta}|Z^{a}u_{\xi}^{{T}}A_3Z^{a}u_{\eta}|+C\langle \xi\rangle^{2+2\delta}|Z^{a}u_{\eta}^{{T}}A_3Z^{a}u_{\eta}|\nonumber\\
&\leq C|\langle \xi\rangle^{1+\delta}Z^{a}u_{\xi}|^2(|u|+|u_{\xi}|+|u_{\eta}|)\nonumber\\
&+C|\langle \xi\rangle^{1+\delta}Z^{a}u_{\xi}||\langle \xi\rangle^{1+\delta}u_{\xi}||Z^{a}u_{\eta}|+{\textcolor{blue}{C|\langle \xi\rangle^{1+\delta}Z^{a}u_{\eta}|}}|\langle \xi\rangle^{1+\delta}u_{\xi}||Z^{a}u_{\eta}|\nonumber\\
&\leq C|\langle \xi\rangle^{1+\delta}Z^{a}u_{\xi}|^2(|u|+|u_{\xi}|+|u_{\eta}|)+C\sum_{|b|=2}|\langle \xi\rangle^{1+\delta}Z^{b}u_{\xi}||\langle \xi\rangle^{1+\delta}u_{\xi}||Z^{a}u_{\eta}|.
 \end{align}
Here we also use the following simple but important fact: for the multi-index $a=(a_1,a_2),|a|=2,a_1\neq 0$, it holds that
\begin{align}\label{fact}
|\langle \xi\rangle^{1+\delta}Z^{a}u_{\eta}|=|\langle \xi\rangle^{1+\delta}\partial_{\xi}^{a_1}\partial_{\eta}^{a_2}u_{\eta}|\leq \sum_{|b|=2}|\langle \xi\rangle^{1+\delta}Z^{b}u_{\xi}|.
\end{align}
According to \eqref{order1}, \eqref{order22}, \eqref{order33}, \eqref{xuyao00099}, \eqref{20p06}, \eqref{rg56}, \eqref{here456}, Lemma \ref{xuyaouioo99999999} and \eqref{fact}, we can get
\begin{align}\label{qguji999980877778}
|q_3(t,x)|&\leq C\langle \eta\rangle^{-(1+\delta)}\langle \xi\rangle^{2+2\delta}|Z^{a}u_{\xi}|^2(|u|+|u_{\xi}|+|u_{\eta}|)+C\langle \xi\rangle^{2+2\delta} |Z^{a}u_{\xi}|^2(|u_{\eta}|+|Zu_{\eta}|)  \nonumber\\
&+ C\langle \xi\rangle^{2+2\delta}|Z^{a}u_{\xi}|(|u_{\xi}|+|Zu_{\xi}|)|Z^{a}u_{\eta}|+C\langle \xi\rangle^{2+2\delta}|Z^{a}u_{\eta}|^2(|u_{\xi}|+|Zu_{\xi}|)\nonumber\\
&\leq C|\langle \eta\rangle^{-\frac{1+\delta}{2}}\langle \xi\rangle^{1+\delta}Z^{a}u_{\xi}|^2(|u|+|u_{\xi}|+|u_{\eta}|)\nonumber\\
&+C |\langle \eta\rangle^{-\frac{1+\delta}{2}}\langle \xi\rangle^{1+\delta} Z^{a}u_{\xi}|^2(|\langle \eta\rangle^{1+\delta}u_{\eta}|+|\langle \eta\rangle^{1+\delta}Zu_{\eta}|)  \nonumber\\
&+ C|\langle \eta\rangle^{-\frac{1+\delta}{2}}\langle \xi\rangle^{1+\delta}Z^{a}u_{\xi}||\langle \eta\rangle^{-\frac{1+\delta}{2}}\langle \xi\rangle^{1+\delta}(|u_{\xi}|+|Zu_{\xi}|)||\langle \eta\rangle^{1+\delta}Z^{a}u_{\eta}|\nonumber\\
&+ {\textcolor{blue}{C|\langle \eta\rangle^{-\frac{1+\delta}{2}}\langle \xi\rangle^{1+\delta}Z^{a}u_{\eta}|}}|\langle \eta\rangle^{-\frac{1+\delta}{2}}\langle \xi\rangle^{1+\delta}(|u_{\xi}|+|Zu_{\xi}|)||\langle \eta\rangle^{1+\delta}Z^{a}u_{\eta}|\nonumber\\
&\leq C|\langle \eta\rangle^{-\frac{1+\delta}{2}}\langle \xi\rangle^{1+\delta}Z^{a}u_{\xi}|^2(|u|+|u_{\xi}|+|u_{\eta}|)\nonumber\\
&+C |\langle \eta\rangle^{-\frac{1+\delta}{2}}\langle \xi\rangle^{1+\delta} Z^{a}u_{\xi}|^2(|\langle \eta\rangle^{1+\delta}u_{\eta}|+|\langle \eta\rangle^{1+\delta}Zu_{\eta}|)  \nonumber\\
&+ C\sum_{|b|=2}|\langle \eta\rangle^{-\frac{1+\delta}{2}}\langle \xi\rangle^{1+\delta}Z^{b}u_{\xi}||\langle \eta\rangle^{-\frac{1+\delta}{2}}\langle \xi\rangle^{1+\delta}(|u_{\xi}|+|Zu_{\xi}|)||\langle \eta\rangle^{1+\delta}Z^{a}u_{\eta}|.
\end{align}
Lemma \ref{xuyaouioo99999999} implies
\begin{align}\label{dfgtyuuu766}
|H_a|&\leq C\sum_{|b|\leq 2}|Z^{b}u_{\xi}|\big(|u_{\eta}|+|u_{\eta\eta}|+|u_{\xi\eta}|+|u_{\eta}||u_{\xi\xi}|\big)\nonumber\\
&+C\sum_{|b|\leq 2}|Z^{b}u_{\eta}|\big(|u_{\xi}|+|u_{\xi\xi}|+|u_{\xi\eta}|+|u_{\xi}||u_{\eta\eta}|\big)\nonumber\\
&+C\sum_{|b|+|c|\leq 2}|Z^{b}u_{\xi}||Z^{c}u_{\eta}|.
\end{align}
\par
From \eqref{hj78999}, \eqref{333333}, \eqref{opp9pppp},  we obtain
\begin{align}\label{xddd89uii}
&\sup_{0\leq s\leq t}\sum_{\substack{|a|=2\\a_1\neq 0}}\|\langle \xi\rangle^{1+\delta}Z^{a}u_{\xi}\|_{L_{x}^2}^2+ \sum_{\substack{|a|=2\\a_1\neq 0}}\|\langle\eta\rangle^{-\frac{1+\delta}{2}}\langle \xi\rangle^{1+\delta}Z^{a}u_{\xi}\|_{L^2_{s,x}}^2\nonumber\\
&\leq C E_3(u(0))+C\sup_{0\leq s\leq t}\sum_{\substack{|a|=2\\a_1\neq 0}}\|\widetilde{e}_3(s,\cdot)\|_{L^1(\mathbb{R})}+C\sum_{\substack{|a|=2\\a_1\neq 0}}\int_0^{t}\|q_3(s,\cdot)\|_{L^1(\mathbb{R})}ds\nonumber\\
&+C\sum_{\substack{|a|=2\\a_1\neq 0}}\int_0^{t}\|\langle \xi\rangle^{2+2\delta}Z^{a}u_{\xi}H_a\|_{L_{x}^1(\mathbb{R})}ds.
\end{align}
By \eqref{By3555}, H\"{o}lder inequality, \eqref{224iijuio}, Sobolev embedding $H^1(\mathbb{R})\hookrightarrow L^{\infty}(\mathbb{R})$ and Lemma \ref{keyn9078p},  we have
\begin{align}\label{By35558899}
\|\widetilde{e}_3(t,\cdot)\|_{L^1(\mathbb{R})}
&\leq C\|\langle \xi\rangle^{1+\delta}Z^{a}u_{\xi}\|_{L^2_{x}}^2(\|u\|_{L^{\infty}_{x}}+\|u_{\xi}\|_{L^{\infty}_{x}}+\|u_{\eta}\|_{L^{\infty}_{x}})\nonumber\\
&+C\sum_{|b|=2}\|\langle \xi\rangle^{1+\delta}Z^{b}u_{\xi}\|_{L^2_{x}}\|\langle \xi\rangle^{1+\delta}u_{\xi}\|_{L^{\infty}_{x}}\|Z^{a}u_{\eta}\|_{L^2_{x}}\nonumber\\
&\leq C\big(E_{3}(u(t))+\widetilde{E_{3}}(u(t))\big)\big(E^{1/2}_{1}(u(t))+E^{1/2}_{2}(u(t))\big)\nonumber\\
&\leq C\big(E(u(t))+\widetilde{E_{3}}(u(t))\big)E^{1/2}(u(t))\leq CE^{3/2}(u(t)).
\end{align}
Thanks to \eqref{qguji999980877778}, H\"{o}lder inequality, Lemma \ref{xuyao8DD8899} and Lemma \ref{keyn9078p}, we get
\begin{align}\label{q78907}
&\int_0^{t}\|q_3(s,\cdot)\|_{L^1(\mathbb{R})}ds\nonumber\\
&\leq C\|\langle \eta\rangle^{-\frac{1+\delta}{2}}\langle \xi\rangle^{1+\delta}Z^{a}u_{\xi}\|_{L^2_{s,x}}^2(\|u\|_{L^{\infty}_{s,x}}+\|u_{\xi}\|_{L^{\infty}_{s,x}}+\|u_{\eta}\|_{L^{\infty}_{s,x}})\nonumber\\
&+C \|\langle \eta\rangle^{-\frac{1+\delta}{2}}\langle \xi\rangle^{1+\delta} Z^{a}u_{\xi}\|_{L^{2}_{s,x}}^2(\|\langle \eta\rangle^{1+\delta}u_{\eta}\|_{L^{\infty}_{s,x}}+\|\langle \eta\rangle^{1+\delta}Zu_{\eta}\|_{L^{\infty}_{s,x}})  \nonumber\\
&+ C\sum_{|b|=2}\|\langle \eta\rangle^{-\frac{1+\delta}{2}}\langle \xi\rangle^{1+\delta}Z^{b}u_{\xi}\|_{L^{2}_{s,x}}\|\langle \eta\rangle^{-\frac{1+\delta}{2}}\langle \xi\rangle^{1+\delta}(|u_{\xi}|+|Zu_{\xi}|)\|_{L^{2}_{s}L^{\infty}_{x}}\|\langle \eta\rangle^{1+\delta}Z^{a}u_{\eta}\|_{L^{\infty}_{s}L^{2}_{x}}\nonumber\\
&\leq C\big(\sup_{0\leq s\leq t}{E^{1/2}}(u(s))+\sup_{0\leq s\leq t}\widetilde{E_3}^{1/2}(u(s))\big)\big({\mathcal{E}}(u(t))+\widetilde{\mathcal{E}_3}(u(t))\big)\nonumber\\
&\leq C\sup_{0\leq s\leq t}{E^{1/2}}(u(s)){\mathcal{E}}(u(t)).
\end{align}
H\"{o}lder inequality implies
\begin{align}\label{xuyaoo89676hfftu8990}
&\int_0^{t}\|\langle \xi\rangle^{2+2\delta}Z^{a}u_{\xi}H_a\|_{L_{x}^1(\mathbb{R})}ds\nonumber\\
&\leq C\|\langle \eta\rangle^{-\frac{1+\delta}{2}}\langle \xi\rangle^{1+\delta}
Z^{a}u_{\xi}\|_{L^2_{s,x}}\|\langle \eta\rangle^{\frac{1+\delta}{2}}\langle \xi\rangle^{1+\delta}
H_a\|_{L^2_{s,x}}\nonumber\\
&\leq \mathcal {E}_3^{1/2}(u(t))\|\langle \eta\rangle^{\frac{1+\delta}{2}}\langle \xi\rangle^{1+\delta}
H_a\|_{L^2_{s,x}}.
\end{align}
Via \eqref{dfgtyuuu766}, Lemma \ref{xuyao8DD8899} and Lemma \ref{keyn9078p}, we can see
\begin{align}\label{xuyao8977}
&\|\langle \eta\rangle^{\frac{1+\delta}{2}}\langle \xi\rangle^{1+\delta}
H_a\|_{L^2_{s,x}}\nonumber\\
&\leq C\sum_{|b|\leq 2}\|\langle \eta\rangle^{-\frac{1+\delta}{2}}\langle \xi\rangle^{1+\delta}Z^{b}u_{\xi}\|_{_{L^2_{s,x}}}\|\langle \eta\rangle^{{1+\delta}}\big(|u_{\eta}|+|u_{\eta\eta}|+|u_{\xi\eta}|+|u_{\eta}||u_{\xi\xi}|\big)\|_{{L^{\infty}_{s,x}}}\nonumber\\
&+C\sum_{|b|\leq 2}\|\langle \eta\rangle^{{1+\delta}}Z^{b}u_{\eta}\|_{L^{\infty}_{s}L^2_{x}} \|\langle \eta\rangle^{-\frac{1+\delta}{2}}\langle \xi\rangle^{1+\delta}\big(|u_{\xi}|+|u_{\xi\xi}|+|u_{\xi\eta}|+|u_{\xi}||u_{\eta\eta}|\big)\|_{L^{2}_{s}L^{\infty}_{x}}\nonumber\\
&+C\sum_{\substack{|b|+|c|\leq 2\\|b|\leq 1}}\|\langle \eta\rangle^{-\frac{1+\delta}{2}}\langle \xi\rangle^{1+\delta}Z^{b}u_{\xi}\|_{L^{2}_{s}L^{\infty}_{x}}\|\langle \eta\rangle^{{1+\delta}}Z^{c}u_{\eta}\|_{L^{\infty}_{s}L^2_{x}}\nonumber\\
&+C\sum_{\substack{|b|+|c|\leq 2\\|c|\leq 1}}\|\langle \eta\rangle^{-\frac{1+\delta}{2}}\langle \xi\rangle^{1+\delta}Z^{b}u_{\xi}\|_{_{L^2_{s,x}}}\|\langle \eta\rangle^{{1+\delta}}Z^{c}u_{\eta}\|_{{L^{\infty}_{s,x}}}\nonumber\\
&\leq C\big(\sup_{0\leq s\leq t}{E^{1/2}}(u(s))+\sup_{0\leq s\leq t}\widetilde{E_3}^{1/2}(u(s))\big)\big({\mathcal{E}^{1/2}}(u(t))+\widetilde{\mathcal{E}_3}^{1/2}(u(t))\big)\nonumber\\
&\leq C\sup_{0\leq s\leq t}{E^{1/2}}(u(s)){\mathcal{E}}^{1/2}(u(t)).
\end{align}
It follows from \eqref{xuyaoo89676hfftu8990} and \eqref{xuyao8977} that
\begin{align}\label{dddff6899}
\int_0^{t}\|\langle \xi\rangle^{2+2\delta}Z^{a}u_{\xi}H_a\|_{L_{x}^1(\mathbb{R})}ds\leq C\sup_{0\leq s\leq t}{E^{1/2}}(u(s)){\mathcal{E}}(u(t)).
\end{align}
We conclude from \eqref{xddd89uii}, \eqref{By35558899}, \eqref{q78907} and \eqref{dddff6899} that
\begin{align}\label{xddd89ui9dddddd988i}
&\sup_{0\leq s\leq t}\sum_{\substack{|a|=2\\a_1\neq 0}}\|\langle \xi\rangle^{1+\delta}Z^{a}u_{\xi}\|_{L_{x}^2}^2+ \sum_{\substack{|a|=2\\a_1\neq 0}}\|\langle\eta\rangle^{-\frac{1+\delta}{2}}\langle \xi\rangle^{1+\delta}Z^{a}u_{\xi}\|_{L^2_{s,x}}^2\nonumber\\
&\leq C E_3(u(0))+C\sup_{0\leq s\leq t}E^{3/2}(u(s))+C\sup_{0\leq s\leq t}{E^{1/2}}(u(s)){\mathcal{E}}(u(t)).
\end{align}
\par
Noting the duality between $\xi$ and $\eta$, by a similar way, we can also get
\begin{align}\label{xddddd89ui9988dddddi}
&\sup_{0\leq s\leq t}\sum_{\substack{|a|=2\\a_2\neq 0}}\|\langle \eta\rangle^{1+\delta}Z^{a}u_{\eta}\|_{L_{x}^2}^2+ \sum_{\substack{|a|=2\\a_2\neq 0}}\|\langle\xi\rangle^{-\frac{1+\delta}{2}}\langle \eta\rangle^{1+\delta}Z^{a}u_{\eta}\|_{L^2_{s,x}}^2\nonumber\\
&\leq C E_3(u(0))+C\sup_{0\leq s\leq t}E^{3/2}(u(s))+C\sup_{0\leq s\leq t}{E^{1/2}}(u(s)){\mathcal{E}}(u(t)).
\end{align}
\par
Acordingly, we have obtained
\begin{align}\label{xddddd89uiddd9988dddddi}
&\sup_{0\leq s\leq t}E_3(u(s))+ \mathcal{E}_3(u(t))\nonumber\\
&\leq C E_3(u(0))+C\sup_{0\leq s\leq t}E^{3/2}(u(s))+C\sup_{0\leq s\leq t}{E^{1/2}}(u(s)){\mathcal{E}}(u(t)).
\end{align}

\subsection{Global existence}
Noting \eqref{dijie3455888}, \eqref{dijie3455} and \eqref{xddddd89uiddd9988dddddi}, we have
\begin{align}\label{ZUSHI}
&\sup_{0\leq s\leq t}E(u(s))+ \mathcal{E}(u(t))\nonumber\\
&\leq C E(u(0))+C\sup_{0\leq s\leq t}E^{3/2}(u(s))+C\sup_{0\leq s\leq t}{E^{1/2}}(u(s)){\mathcal{E}}(u(t)).
\end{align}
Under the assumption \eqref{dddddd}, we have
\begin{align}
\sup_{0\leq s\leq t}E(u(s))+\mathcal {E}(u(t))\leq C_1\varepsilon^2+8C_1A^3\varepsilon^3.
\end{align}
Assume that
\begin{align}
E(u(0))\leq \widetilde{C_1}\varepsilon^2.
\end{align}
Taking $A^2=4\max\{C_1,\widetilde{C_1}\}$ and $\varepsilon_0$ so small that
\begin{align}
16C_1A\varepsilon_0\leq 1,
\end{align}
for any $\varepsilon$ with $0<\varepsilon\leq \varepsilon_0$, we have
\begin{align}\label{351}
\sup_{0\leq s\leq t}E(u(s))+\mathcal {E}(u(t))\leq A^2\varepsilon^2.
\end{align}\par
This completes the proof of global existence part of Theorem \ref{mainthm}.
\subsection{Asymptotic behavior}
In view of Lemma \ref{xuyaoodddd897}, in order to prove that the global solution $u$ to the system \eqref{quasiwave} is asymptotically free in the energy sense, we only need to show
\begin{align}\label{cfgtyyyyyyy}
\int_0^{+\infty}\|A_1u_{\xi\eta}\|_{L^2_{x}(\mathbb{R})}
+\|A_2u_{\xi\xi}\|_{L^2_{x}(\mathbb{R})}+\|A_3u_{\eta\eta}\|_{L^2_{x}(\mathbb{R})}+\|F\|_{L^2_{x}(\mathbb{R})} dt<+\infty.
\end{align}
\par
First, by \eqref{order1}, \eqref{order22}, \eqref{order33}, Lemma \ref{xuyao8DD8899}, Lemma \ref{keyn9078p} and \eqref{351}, we have pointwise estimates
\begin{align}\label{xyaio890uhu888}
& |A_1u_{\xi\eta}|
+|A_2u_{\xi\xi}|+|A_3u_{\eta\eta}|+|F|\nonumber\\
&\leq C|u||u_{\xi\eta}|+C|u_{\xi}|(|u_{\eta}|+|u_{\xi\eta}|+|u_{\eta\eta}|)+C|u_{\eta}|(|u_{\eta\xi}|+|u_{\xi\xi}|)\nonumber\\
&\leq CE^{1/2}(u(t))|u_{\xi\eta}|+C\langle \xi\rangle^{-(1+\delta)}\langle \eta\rangle^{-(1+\delta)}E^{1/2}(u(t))\big(E^{1/2}(u(t))+\widetilde{E_3}^{1/2}(u(t))\big)\nonumber\\
&\leq CE^{1/2}(u(t))|u_{\xi\eta}|+C\langle \xi\rangle^{-(1+\delta)}\langle \eta\rangle^{-(1+\delta)}E(u(t))\nonumber\\
&\leq CA\varepsilon|u_{\xi\eta}|+CA^2\varepsilon^2\langle \xi\rangle^{-(1+\delta)}\langle \eta\rangle^{-(1+\delta)}.
\end{align}
In view of the system \eqref{quasiwave}, we also have
 \begin{align}
&|u_{\xi\eta}|\leq |A_1u_{\xi\eta}|
+|A_2u_{\xi\xi}|+|A_3u_{\eta\eta}|+|F|\leq  CA\varepsilon|u_{\xi\eta}|+CA^2\varepsilon^2\langle \xi\rangle^{-(1+\delta)}\langle \eta\rangle^{-(1+\delta)}.
\end{align}
If $\varepsilon$ is sufficiently small, it holds that
\begin{align}\label{eqrefvvffff}
&|u_{\xi\eta}|\leq CA^2\varepsilon^2\langle \xi\rangle^{-(1+\delta)}\langle \eta\rangle^{-(1+\delta)}.
\end{align}
It follows from \eqref{xyaio890uhu888} and \eqref{eqrefvvffff} that
\begin{align}\label{xyaio890uhu}
& |A_1u_{\xi\eta}|
+|A_2u_{\xi\xi}|+|A_3u_{\eta\eta}|+|F|\leq CA^2\varepsilon^2\langle \xi\rangle^{-(1+\delta)}\langle \eta\rangle^{-(1+\delta)}.
\end{align}
\par
Noting that
\begin{align}
&\langle \xi\rangle^{-(1+\delta)}\langle \eta\rangle^{-(1+\delta)}\leq C\langle t+x\rangle^{-(1+\delta)}\langle t-x\rangle^{-(1+\delta)}\nonumber\\
&\leq C\langle t+|x|\rangle^{-(1+\delta)}\langle t-|x|\rangle^{-(1+\delta)}\leq C\langle t\rangle^{-(1+\delta)}\langle t-|x|\rangle^{-(1+\delta)},
\end{align}
we have
\begin{align}\label{yiweio09}
\|\langle \xi\rangle^{-(1+\delta)}\langle \eta\rangle^{-(1+\delta)}\|_{L^2_{x}(\mathbb{R})}\leq C\|\langle t-|x|\rangle^{-(1+\delta)}\|_{L^2_{x}(\mathbb{R})}\langle t\rangle^{-(1+\delta)}\leq C\langle t\rangle^{-(1+\delta)}.
\end{align}
The combination of \eqref{xyaio890uhu} and \eqref{yiweio09} gives
\begin{align}\label{xyaio89ffff0uhu}
& \|A_1u_{\xi\eta}\|_{L^2_{x}(\mathbb{R})}
+\|A_2u_{\xi\xi}\|_{L^2_{x}(\mathbb{R})}+\|A_3u_{\eta\eta}\|_{L^2_{x}(\mathbb{R})}+\|F\|_{L^2_{x}(\mathbb{R})}\leq CA^2\varepsilon^2\langle t\rangle^{-(1+\delta)}.
\end{align}
\eqref{cfgtyyyyyyy} is just a consequence of \eqref{xyaio89ffff0uhu}. \par
This completes the proof of asymptotic behavior part of Theorem \ref{mainthm}.

\section*{Acknowledgements}
The author would like to express his sincere gratitude to Prof. Arick Shao, Prof. Willie Wai-Yeung Wong and Prof. Shiwu Yang for their helpful comments on the topic in this paper.\par
The author was supported by National Natural Science Foundation of China
(No.11801068) and the Fundamental Research Funds for the Central Universities.

\end{document}